\newcommand{\dd}{\mathsf{d}}
\newcommand{\norm}[1]{\left\|#1\right\|}
\newcommand{\abs}[1]{\left|#1\right|}
\newcommand{\F}{\mathsf{F}}
\newcommand{\NF}{\mathsf{N}_{\F}}
\renewcommand{\d}{\mathsf{d}}
\newcommand{\dx}{\,\d x}
\newcommand{\jmp}[1]{\left\llbracket#1\right\rrbracket}
\newcommand{\NN}[1]{\left\|#1\right\|}
\newcommand{\hp}{{\rm hp}}
\renewcommand{\u}{u^{\rm hp}}
\newcommand{\uu}{u^{(\Delta t,\rm hp)}_{n+1}}
\newcommand{\T}{\mathcal{T}}
\newcommand{\p}{\bm p}
\newcommand{\V}{V^{\rm hp}(\T,\p)}
\newcommand{\e}{\epsilon}
\newcommand{\NNN}[1]{\left|\!\left|\!\left|#1\right|\!\right|\!\right|}
\newcommand{\intp}{\pi_{\V}}
\newcommand{\eremk}{\hbox{}\hfill\rule{0.8ex}{0.8ex}}
\newtheorem{theorem}{Theorem}[section]
\newtheorem{proposition}[theorem]{Proposition} 
\newtheorem{corollary}[theorem]{Corollary}
\theoremstyle{definition}
\newtheorem{example}[theorem]{Example}
\newtheorem{algorithm}[theorem]{Algorithm}
\newtheorem{remark}[theorem]{Remark}
\newcommand{\myState}[1]{\State\parbox[t]{\dimexpr\linewidth-\algorithmicindent}{#1\strut}}
\numberwithin{equation}{section}
\numberwithin{equation}{section}
\title[$hp$-Adaptive Newton-Galerkin FEM]{An $hp$-Adaptive Newton-Galerkin Finite Element Procedure for Semilinear Boundary Value Problems}
\author[M.~Amrein]{Mario Amrein}
\address{Lucerne University of Applied Sciences and Arts, CH-6002 Luzern, Switzerland}
\email{mario.amrein@hslu.ch}
\author[J.~M.~Melenk]{Jens M.~Melenk}
\address{
Institut f\"ur Analysis und Scientific Computing, 
TU Wien, A-1040 Wien, Austria}
\email{melenk@tuwien.ac.at}
\author[T.~P.~Wihler]{Thomas P.~Wihler}
\address{Mathematics Institute, University of Bern, CH-3012 Switzerland}
\email{wihler@math.unibe.ch}
\thanks{TW was supported by the Swiss National Science Foundation (SNF), Grant No. 200021-162990.}
\begin{document}

\begin{abstract}
In this paper we develop an $hp$-adaptive procedure for the numerical solution of general, semilinear elliptic boundary value problems in 1d, with possible singular perturbations. Our approach combines both a prediction-type adaptive Newton method and an $hp$-version adaptive finite element discretization (based on a robust {\em a posteriori} residual analysis), thereby leading to a fully $hp$-adaptive Newton-Galerkin scheme. Numerical experiments underline the robustness and reliability of the proposed approach for various examples.
\end{abstract}

\keywords{Adaptive Newton methods, semilinear elliptic problems, singularly perturbed problems, adaptive finite element methods, $hp$-FEM, $hp$-adaptivity.}

\subjclass[2010]{49M15,58C15,65N30}

\maketitle

\section{Introduction}

The purpose of this paper is the development of a numerical approximation procedure for semilinear elliptic boundary value problems, with possible singular perturbations. More precisely, for a fixed parameter~$\e>0$ (possibly with~$\e\ll 1$), and a continuously differentiable function $f:\,\Omega\times\mathbb{R}\to\mathbb{R}$, we consider the problem of finding a function~$u:\,\Omega\to\mathbb{R}$ in an interval~$\Omega=(a,b)$, $a<b$, which satisfies
\begin{equation}\label{eq:poisson}
\begin{aligned}
-\e u''(x) &=f(x,u(x)) \text{ for }x\in \Omega,\qquad
u(a)=u(b)=0. 
\end{aligned}
\end{equation}
Solutions of~\eqref{eq:poisson} are typically not unique (even infinitely many solutions may exist), and, in the singularly perturbed case, may exhibit boundary layers, interior shocks, and (multiple) spikes. The existence of multiple solutions due to the nonlinearity of the problem and/or the appearance of singular effects constitute two challenging issues when solving problems of this type numerically; see, e.g.,\cite{RoStTo08,Verhulst}.

\emph{Newton-Galerkin Methods:} In order to address the numerical solution of nonlinear singularly perturbed problems of type~\eqref{eq:poisson}, a Newton-Galerkin approach has recently been proposed in~\cite{AmreinWihler:14,AmreinWihler:15}. It is built upon two main ideas: Firstly, adaptive Newton methods are applied on the continuous level in order to approximate the nonlinear differential equation~\eqref{eq:poisson} by a sequence of linearized problems. Secondly, the resulting linear problems are discretized by means of an adaptive finite element procedure which, in turn, is based on an $\e$-robust \emph{a posteriori} error analysis. Then, providing an appropriate interplay between the adaptive Newton method and the adaptive finite element approach, by which we either perform a Newton step (if the Newton linearization effect dominates) or refine the current finite element mesh based on some suitable {\em a posteriori} error indicators (in case that the finite element discretization causes the main source of error), a fully adaptive Newton-Galerkin scheme is obtained; see also the related works on monotone nonlinear problems~\cite{ChaillouSuri:07,El-AlaouiErnVohralik:11,CongreveWihler:15}, or the articles~\cite{ChaillouSuri:06,Han:94} dealing with modelling errors in linearized models. Our numerical results in~\cite{AmreinWihler:15} reveal that sensible solutions can be found even in the singularly perturbed case, and that our scheme is reliable in the sense that the adaptive Newton-Galerkin method is able to converge to solutions which preserve the qualitative structure of (reasonably chosen) initial guesses (see also~\cite{AmreinWihler:14,ScWi11}).

\emph{$hp$-FEM and $hp$-adaptivity:} The aim of the present paper is to extend our work in~\cite{AmreinWihler:15} from a low-order to an $hp$-adaptive Newton-Galerkin approach. For simplicity of exposition, we focus on the 1d case, however, we mention that an extension to the multi-d case is possible as well. Exploiting the $hp$-framework may potentially lead to \emph{exponential rates of convergence} in the numerical approximation. For the purpose of~$hp$-adaptivity, we carry out an $\e$-robust $hp$-version \emph{a posteriori} residual analysis for the FEM discretizations of the Newton linearizations. We remark that using $hp$-adaptivity, in contrast to the adaptive low-order methodology developed in the previous works~\cite{El-AlaouiErnVohralik:11,AmreinWihler:14,AmreinWihler:15}, introduces an additional aspect of adaptivity that may further improve the performance of the Newton-Galerkin approach. In the $hp$-literature, several adaptive strategies and algorithms have been proposed; see, e.g., the overview article~\cite{MiMc11} and the references therein. In particular, we point to the smoothness estimation techniques proposed in~\cite{eibner-melenk06,HoustonSeniorSuliENUMATH,HoustonSuliHPADAPT,Ma94}, and the related approach~\cite{FaWiWi14,Wi11_2,Wi11} involving continuous Sobolev embeddings. The latter works will be applied in the current paper in order to drive the $hp$-adaptive finite element refinements; in combination with the adaptive Newton linearizations this will lead to a fully $hp$-adaptive Newton-Galerkin procedure.

\emph{Problem formulation and notation:} Henceforth, we suppose that a (not necessarily unique) solution~$u\in X:=H^1_0(\Omega)$ of~\eqref{eq:poisson} exists; here, we denote by $H^1_0(\Omega)$ the standard Sobolev space of functions in~$H^1(\Omega)=W^{1,2}(\Omega)$ with zero trace on~$\partial\Omega:=\{a,b\}$. Furthermore, signifying by~$X'=H^{-1}(\Omega)$ the dual space of~$X$, and upon defining the map $\F_{\e}: X\rightarrow X'$ through
\begin{equation}\label{eq:Fweak}
\left \langle \F_{\e}(u),v\right \rangle :=  \int_{\Omega}\left\{\e u'v'-f(u)v\right\}\dx\qquad \forall v\in X,
\end{equation}
where $\left\langle\cdot,\cdot\right\rangle$ is the dual product in~$X'\times X$, the above problem~\eqref{eq:poisson} can be written as a nonlinear operator equation in~$X'$:
\begin{equation}\label{eq:F0}
u\in X:\qquad \F_\e(u)=0.
\end{equation}
In addition, for open subsets~$D\subseteq\Omega$, we introduce the norm
\begin{equation*}
\NNN{u}_{\e,D}:=\Bigl(\e\norm{u'}_{0,D}^2 +\norm{u}_{0,D}^2 \Bigr)^{\nicefrac{1}{2}},
\end{equation*}
where~$\|\cdot\|_{0,D}$ denotes the $L^2$-norm on~$D$. Note that, in the case of~$f(u)=-u+g$, with given~$g\in L^2(\Omega)$, i.e., when~\eqref{eq:poisson} is linear and strongly elliptic, the norm $\NNN{\cdot}_{\e,\Omega}$ is a natural energy norm on~$X$. Frequently, for~$D=\Omega$, the subindex~`$D$' will be omitted.  Furthermore, the associated dual norm of~$\F_\e$ from~\eqref{eq:Fweak}  is given by
\[
\NNN{\F_\e(u)}_{X',\e} =  \sup_{\genfrac{}{}{0pt}{}{v\in X}{\NNN{v}_{\e}=1}}\int_{\Omega}\left\{\e u'v'-f(u)v\right\}\dx.
\]
Throughout this work we shall use the abbreviation $x \preccurlyeq y$ to mean $x \leq c y$, for a constant $c>0$ independent of the mesh size~$h$, the local polynomial degree~$p$, and of~$\e>0 $. 

\emph{Outline:} In Section~\ref{sc:anewton} we will consider the Newton method within the context of dynamical systems in general Banach spaces, and present a prediction strategy for controlling the Newton step size parameter. Furthermore, Section~\ref{sc:FEM} focuses on the application of the Newton methodology to semilinear elliptic problems as well as on the discretization of the problems under consideration by $hp$-finite element methods; in addition, we derive an $\e$-robust {\em a posteriori} residual analysis. An $hp$-adaptive procedure and a series of numerical experiments illustrating the performance of the proposed idea will be presented in Section~\ref{sc:adaptivity}. Finally, we summarize our findings in Section~\ref{sc:concl}.

\section{Adaptive Newton Methods in Banach Spaces}\label{sc:anewton}

In the following section we shall briefly revisit an adaptive prediction-type Newton algorithm from~\cite{AmreinWihler:15}. 

\subsection{Abstract Framework}
Let $ X, Y $ be two Banach spaces, with norms~$\|\cdot\|_X$ and~$\|\cdot\|_Y$, respectively. Given an open subset~$\Xi\subset X$, and a (possibly nonlinear) operator~$\F:\,\Xi\to Y$, we consider the {\em nonlinear} operator equation
\begin{equation}\label{eq:F=0}
\F(u)=0,
\end{equation} 
for some unknown zeros~$u\in\Xi$. Supposing that the Fr\'echet derivative~$\F'$ of~$\F$ exists in~$\Xi$ (or in a suitable subset), the classical Newton method for solving~\eqref{eq:F=0} starts from an initial guess~$u_0\in\Xi$, and generates the iterates
\begin{equation}\label{eq:newton}
u_{n+1}=u_n+\delta_n,\qquad n\ge 0,
\end{equation}
where the update~$\delta_n\in X$ is implicitly given by the {\em linear} equation
\[
\F'(u_n)\delta_n=-\F(u_n),\qquad n\ge 0.
\]
Naturally, for this iteration to be well-defined, we need to assume that~$\F'(u_n)$ is invertible for all~$n\ge 0$, and that~$\{u_n\}_{n\ge 0}\subset\Xi$. 

\subsection{An Adaptive Prediction Strategy}\label{sc:newton}

In order to improve the reliability of the Newton method~\eqref{eq:newton} in the case that the initial guess~$u_0$ is relatively far away from a root $u_{\infty}\in\Xi$ of~$\F$, $\F(u_\infty)=0$, introducing some damping in the Newton method is a well-known remedy. In that case~\eqref{eq:newton} is rewritten as
\begin{equation}\label{eq:damped}
u_{n+1}=u_n-\Delta t_n\F'(u_n)^{-1}\F(u_n),\qquad n\ge 0,
\end{equation} 
where $\Delta t_n>0$, $n\ge 0$, is a damping parameter that may be adjusted {\em adaptively} in each iteration step. 

Provided that~$\F'(u)$ is invertible on a suitable subset of~$\Xi\subset X$, we define the {\em Newton transform} by
\[
u\mapsto\NF(u):=-\F'(u)^{-1}\F(u).
\]
Then, rearranging terms in~\eqref{eq:damped}, we notice that
\begin{equation*}
\frac{u_{n+1}-u_{n}}{\Delta t_n}=\NF(u_n), \qquad n\ge 0,
\end{equation*}
i.e., \eqref{eq:damped} can be seen as the discretization of the {\em Davydenko-type system},
\begin{equation}\label{eq:davy}
\begin{split}
\dot{u}(t)&=\NF(u(t)), \quad t\geq 0,\qquad
 u(0)=u_0,
 \end{split}
\end{equation}
by the forward Euler scheme, with step size~$\Delta t_n>0$. 

For~$t\in[0,\infty)$, the solution~$u(t)$ of~\eqref{eq:davy}, if it exists, defines a trajectory in~$X$ that starts at~$u_0$, and that will potentially converge to a zero of~$\F$ as~$t\to\infty$. Indeed, this can be seen (formally) from the integral form of~\eqref{eq:davy}, that is,
\begin{equation*}
\F(u(t))=\F(u_0)e^{-t},\qquad t\ge 0,
\end{equation*}
which implies that~$\F(u(t))\to 0$ as~$t\to\infty$.

Now taking the view of dynamical systems, our goal is to compute an upper bound for the value of the step sizes~$\Delta t_n>0$ from~\eqref{eq:damped}, $n\ge 0$, so that the discrete forward Euler solution~$\{u_n\}_{n\ge 0}$ from~\eqref{eq:damped} stays reasonably close to the continuous solution of~\eqref{eq:davy}. Specifically, for a prescribed tolerance~$\tau>0$, a Taylor expansion analysis (see~\cite[Section~2]{AmreinWihler:15} for details) reveals that 
\[
u(t)=u_0+t\NF(u_0)+\frac{t^2}{2h}\eta_h+\mathcal{O}(t^3)+\mathcal{O}(t^2h\|\NF(u_0)\|_X^2),
\]
where, for any sufficiently small~$h>0$, we let~$\eta_h=\NF(u_0+h\NF(u_0))-\NF(u_0)$. Hence, after the first time step of length~$\Delta t_0>0$ there holds
\begin{equation}\label{eq:O}
u(\Delta t_0)-u_1=\frac{\Delta t_0^2}{2h}\eta_h+\mathcal{O}(\Delta t_0^3)+\mathcal{O}(\Delta t_0^2h\|\NF(u_0)\|_X^2),
\end{equation}
where~$u_1$ is the forward Euler solution from~\eqref{eq:damped}. Therefore, upon setting
\[
\Delta t_0=\sqrt{2\tau h\|\eta_h\|_X^{-1}},
\] 
we arrive at
\[
\|u(\Delta t_0)-u_1\|_X\le\tau+\mathcal{O}(\Delta t_0^3)+\mathcal{O}(\Delta t_0^2h\|\NF(u_0)\|_X^2).
\]
In order to balance the~$\mathcal{O}$-terms in~\eqref{eq:O} it is sensible to make the choice
\[
h=\mathcal{O}(\Delta t_0\|\NF(u_0)\|_X^{-2}),
\] 
i.e.,
\begin{equation}\label{eq:h}
h=\gamma \Delta t_0\|\NF(u_0)\|_X^{-2},
\end{equation}
for some parameter~$\gamma>0$. This leads to the following \emph{adaptive Newton algorithm}.\\

\begin{algorithm}~\label{al:zs}
Fix a tolerance $\tau>0$ as well as a parameter~$\gamma>0$, and set~$n\gets0$.
\begin{algorithmic}[1]
\State Start the Newton iteration with an initial guess $ u_{0}\in\Xi$.
\If {$n=0$} {choose
\[
\Delta t_0=\min\left\{\sqrt{2\tau\norm{\NF(u_{0})}_{X}^{-1}},1\right\},
\]
based on~\cite[Algorithm~2.1]{AmreinWihler:15},}
\ElsIf {$n\ge 1$} 
\State let $\kappa_{n}=\Delta t_{n-1}$, and $h_n=\gamma \kappa_n\|\NF(u_n)\|_X^{-2}$ based on~\eqref{eq:h};
\State define the Newton step size
\begin{equation}\label{eq:knew}
\Delta t_n=\min\left\{\sqrt{2\tau h_n\norm{\NF(u_n+h_n\NF(u_n))-\NF(u_n)}^{-1}_{X}},1\right\}.
\end{equation}
\EndIf
\State Compute~$u_{n+1}$ based on the Newton iteration~\eqref{eq:damped}, and go to~({\footnotesize 3:}) with $n\leftarrow n+1 $. 
\end{algorithmic}
\end{algorithm}

\begin{remark}
The following comments are noteworthy.
\begin{enumerate}[(i)]
\item We remark that the purpose of the above derivations is to work under minimal structural assumptions on the dynamics caused by the Newton transform. This is particularly important in the context of nonlinear singularly perturbed problems (with~$\e\ll 1$): Indeed, for such problems (as~$\e\to 0$) the inverse of the derivative of the associated nonlinear operator will typically become highly irregular (even on a local level), and, for instance, local or global Lipschitz type or boundedness assumptions (involving, e.g., second derivative information) will in general not be available with practically feasible constants. We remark, however, that more sophisticated step size prediction schemes can be derived if the structure of the nonlinearities can be exploited further; see, in particular, the monograph~\cite{5}.
\item The minimum in~\eqref{eq:knew} ensures that the step size~$\Delta t_n$ is chosen to be~1 whenever possible. Indeed, this is required in order to guarantee quadratic convergence of the Newton iteration close to a root (provided that the root is simple).
\item The preset tolerance~$ \tau $ in the above adaptive strategy will typically be fixed {\em a priori}. Here,  for highly nonlinear problems featuring numerous or even infinitely many solutions, it is typically mandatory to select~$\tau\ll 1$ small in order to remain within the attractor of the given initial guess. This is particularly important if the starting value is relatively far away from a solution.
\eremk
\end{enumerate}
\end{remark}

\section{Newton-Galerkin $hp$-FEM for Semilinear Elliptic Problems}\label{sc:FEM}

The purpose of this section is to derive an $hp$-version Newton-Galerkin finite element formulation for the solution of~\eqref{eq:poisson}. To this end, we will apply the abstract adaptive Newton framework from the previous Section~\ref{sc:anewton}, and apply an $hp$-discretization methodology for the resulting sequence of (locally) linearized problems.

\subsection{Newton Linearization}
In order to apply an adaptive Newton method as introduced in Section~\ref{sc:anewton} to the nonlinear problem~\eqref{eq:F0}, note that the Fr\'echet-derivative of $ \F_{\e} $ from~\eqref{eq:F0} at~$u\in X$ is given by
\begin{equation*}
\left \langle \F_{\e}'(u)w,v\right \rangle = \int_{\Omega}\left\{\e w'v'-f'(u)wv\right\}\dx,\qquad v,w\in X=H^1_0(\Omega),
\end{equation*}
where we write~$f'\equiv\partial_uf$. We note that, if~$f'(u)\in L^{1}(\Omega)$, then $\F_{\e}'(u)$ is a well-defined linear and bounded mapping from~$X$ to~$X'$; see~\cite[Lemma~A.1]{AmreinWihler:15}.

Then, given an initial guess~$u_0\in X$ for~\eqref{eq:F0}, the Newton method~\eqref{eq:damped} is to find~$u_{n+1}\in X$ from~$u_n\in X$, $n\ge 0$, such that
\begin{equation*}
\F_\e'(u_n)(u_{n+1}-u_n)=-\Delta t_n\F_\e(u_n),
\end{equation*}
in~$X'$. 
Equivalently,
\begin{equation}\label{eq:weak}
a_\e(u_n; u_{n+1},v)=a_\e(u_n; u_{n},v)-\Delta t_n\ell_\e(u_n;v)\qquad\forall v\in X,
\end{equation}
where, for {\em fixed}~$u\in X$, 
\begin{equation*}
\begin{aligned}
a_{\e}(u;w,v)&:=\int_{\Omega}\left\{\e w'v'-f'(u)wv\right\}\dx ,\\
l_{\e}(u;v)&:=\int_{\Omega}\left\{\e u'v'-f(u)v\right\}\dx
\end{aligned}
\end{equation*}
are bilinear and linear forms on~$X\times X$ and~$X$, respectively.

\begin{remark}
Incidentally, if there are constants~$\underline{\lambda},\overline{\lambda}\ge 0$ with $\e C_P^{-2}>\overline{\lambda}$ such that $ -\underline\lambda \leq f'(u) \leq \overline{\lambda}$ holds for all $u\in\mathbb{R} $, where $C_P:=\nicefrac{(b-a)}{\pi}>0$ is the (best) constant in the Poincar\'e inequality on~$\Omega=(a,b)$, i.e.,\begin{equation}\label{eq:P}
\|w\|_{0}\le C_P\|w'\|_{0}\qquad\forall w \in X,
\end{equation}
see~\cite[Theorem~257]{HLP52}, then, it is elementary to show that the formulation~\eqref{eq:weak} is a linear second-order diffusion-reaction problem, which is coercive on~$X$. In consequence, for any given $ u_n \in X $, it has a unique solution~$u_{n+1}\in X$; see~\cite{AmreinDiss15} for details.
\eremk
\end{remark}

\subsection{Linearized Finite Element Approach}
In order to provide a numerical approximation of~\eqref{eq:poisson}, we will discretize the weak formulation~\eqref{eq:weak} by means of an $hp$-finite element method, which, in combination with the Newton iteration, constitutes a Newton-Galerkin approximation scheme. Furthermore, in view of deriving an $hp$-adaptive algorithm, we will develop an $\e$-robust $hp$-version {\em a posteriori} residual analysis for the linear finite element formulation.

\subsubsection{$hp$-Spaces}
We introduce a partition~$\T=\{K_j\}_{j=1}^N$ of~$N\ge 1$ (open)
elements~$K_j=(x_{j-1},x_j)$, $j=1,2,\ldots,N$, on~$\Omega=(a,b)$, with $a=x_0<x_1<x_2<\ldots<x_{N-1}<x_N=b$. The length of an element~$K_j$ is denoted by~$h_j=x_j-x_{j-1}$,
$j=1,2,\ldots,N$. For each element $K_j \in \T$, it will be convenient 
to introduce the patch $\widetilde K_j = \bigcup \{K'\in\T\,|\, 
\overline{K'}\cap \overline{K_j} \ne \emptyset\}$ as the union of~$K_j$ and
of the elements adjacent to it. 
In addition, to each element~$K_j$ we associate a
polynomial degree~$p_j\ge 1$, $j=1,2,\ldots,N$. These numbers are
stored in a polynomial degree vector~$\p=(p_1,p_2,\ldots,p_N)$. Then,
we define an $hp$-finite element space by
\[
\V=\left\{v\in H^1_0(\Omega):\,
v|_{K_j}\in\mathbb{P}_{p_j}(K_j),\,j=1,2,\ldots,N\right\},
\]
where, for~$p\ge 1$, we denote by~$\mathbb{P}_p$ the space of all
polynomials of degree at most~$p$. We say that the pair $(\T,\p)$ 
of a partition $\T$ and of a degree vector $\p$ is $\mu$-shape regular, for some constant~$\mu>0$ independent of~$j$, if 
\begin{equation}
\label{eq:gamma-shape-regular}
\mu^{-1} h_{j+1} \leq h_{j} \leq \mu h_{j+1}, 
\qquad  
\mu^{-1} p_{j+1} \leq p_{j} \leq \mu p_{j+1}, 
\qquad j=1,\ldots,N-1, 
\end{equation}
i.e., if both the element sizes and polynomial degrees of neighboring 
elements are comparable. 

\subsubsection{Linear $hp$-FEM}
We define the numerical approximation of~\eqref{eq:poisson} in an iterative way as follows: For~$n=0$, we let~$\V$ be an initial $hp$-discretization space, and $\u_{0} \in \V$ a suitable initial guess for the Newton iteration. Furthermore, for~$n\ge 0$, and given~$\u_n \in \V$, we find a finite element approximation~$\u_{n+1}\in \V$  of~\eqref{eq:weak} as the solution of the weak formulation
\begin{equation}\label{eq:fem}
a_\e(\u_n; \u_{n+1},v)=a_\e(\u_n; \u_n,v)-\Delta t\ell_\e(\u_n;v)\qquad\forall v\in \V.
\end{equation}
Here, $\Delta t$ takes the role of a (possibly varying) step size parameter in the adaptive Newton scheme as described earlier in Section~\ref{sc:newton}.
 
\section{{\em A Posteriori} Residual Analysis}

Recalling~\eqref{eq:fem}, for given~$\Delta t>0$, let us introduce the function
\begin{equation}\label{eq:ut}
u_{n+1}^{(\Delta t,\hp)}:=\u_{n+1}-(1-\Delta t)\u_n,
\end{equation}
as well as
\begin{equation}\label{eq:ft}
f^{\Delta t}(\u_{n+1}):=\Delta tf(\u_n)+f'(\u_n)(u_{n+1}^\hp-\u_n).
\end{equation}
Then, rearranging terms, we can rewrite~\eqref{eq:fem} in the following form:
\begin{equation}
 \label{start}
 \begin{aligned}
\e\int_{\Omega}{ (u_{n+1}^{(\Delta t,\hp)})'v'} \dx& = \int_{\Omega}{ f^{\Delta t} (u_{n+1}^\hp) v }\dx \qquad \forall v  \in V_{0}^h.
\end{aligned}
\end{equation}

The aim of this section is to derive {\em a posteriori} residual-based bounds for~\eqref{start}. In order to measure the discrepancy between the finite element discretization~\eqref{eq:fem} and the original problem~\eqref{eq:poisson}, an obvious quantity to bound is the residual~$\F_{\e}(\u_{n+1})$ in~$X'$. In order to proceed in this direction, we notice that the adaptively chosen damping parameter~$\Delta t$ in the Newton method~\eqref{eq:fem} will equal~1 sufficiently close to a root of~$\F_\e$. For this reason, as was proposed in~\cite{AmreinWihler:15}, we may focus on the `shifted' residual~$\F_{\e}(u_{n+1}^{(\Delta t,\hp)})$ in~$X'$ instead; indeed, estimating this quantity turns out to be very natural in view of the linearized finite element discretization~\eqref{start}.

\subsection{Robust Interpolation Estimates}\label{sc:int}
For the purpose of deriving an (upper) \emph{a posteriori} residual estimate that is robust with respect to the singular perturbation parameter~$\e$ (for~$0<\e\ll 1$) as well as optimally scaled with respect to the local element sizes~$h_j$ and polynomial degrees~$p_j$, it is crucial to construct an interpolation operator that is \emph{simultaneously} $L^2$- and $H^1$-stable. This will be accomplished in the current section; see Proposition~\ref{pr:intp} and Corollary~\ref{cr:int} below.

\begin{proposition}\label{pr:intp} 
Let the pair $(\T,\p)$ be $\mu$-shape regular; see \eqref{eq:gamma-shape-regular}. Then, there exists an interpolation operator~$\intp:\,H^1_0(\Omega)\to\V$ such that, for any~$j=1,2,\ldots,N$, and for all~$v\in H^1_0(\Omega)$, there holds
\begin{equation}\label{eq:CI}
\begin{split}
\NN{v-\intp v}_{L^2(K_j)}&\le C_I\min\left\{\NN{v}_{L^2(\widetilde K_j)}, h_jp_j^{-1}\NN{v'}_{L^2(\widetilde K_j)}\right\},\\
\NN{(v-\intp v)'}_{L^2(K_j)}&\le C_I\NN{v'}_{L^2(\widetilde K_j)}.
\end{split}
\end{equation}
Here, $C_I>0$ is a constant that depends solely on $\mu$; in particular, 
it is independent of~$v$, $\mathcal{T}$, and of~$\bm p$. 
\end{proposition}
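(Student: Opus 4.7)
The plan is to construct $\intp$ as a 1D $hp$-version Scott--Zhang-type quasi-interpolant into~$\V$, built as the sum of a piecewise-linear nodal part (from $L^2$-dual-basis averaging, which secures $L^2$-stability without the $L^2$-unbounded operation of point evaluation) and an elementwise bubble enrichment of degree~$p_j$ (built from a truncated integrated-Legendre / Lobatto expansion, which supplies the sharp $h_j p_j^{-1}$ $L^2$-approximation). Concretely, for each interior node~$x_j$, $1 \le j \le N-1$, fix one adjacent element $K_{j^*} \in \{K_j, K_{j+1}\}$ and set $\mathcal{N}_j(v) := \int_{K_{j^*}} \psi_j v \dx$, where $\psi_j \in \mathbb{P}_1(K_{j^*})$ is $L^2$-biorthogonal to the linear nodal basis function at~$x_j$ on~$K_{j^*}$; at Dirichlet nodes set $\mathcal{N}_0 = \mathcal{N}_N = 0$. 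On each~$K_j$, let $\ell_j v \in \mathbb{P}_1(K_j)$ be the linear polynomial with endpoint values $\mathcal{N}_{j-1}(v), \mathcal{N}_j(v)$, let $b_j v \in \mathbb{P}_{p_j}(K_j) \cap H^1_0(K_j)$ be the degree-$p_j$ Lobatto truncation of the bubble component of~$v|_{K_j}$, and set $\intp v|_{K_j} := \ell_j v + b_j v$; then $\intp v$ is continuous across every interior node, vanishes at~$\partial \Omega$, and belongs to~$\V$.

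The verification of~\eqref{eq:CI} splits into two ingredients. For the nodal part, a scaling argument using $\mu$-shape regularity gives $\|\ell_j v\|_{L^2(K_j)} \preccurlyeq \|v\|_{L^2(\widetilde K_j)}$ and $\|(\ell_j v)'\|_{L^2(K_j)} \preccurlyeq h_j^{-1}\|v\|_{L^2(\widetilde K_j)}$; biorthogonality makes $\ell_j$ reproduce constants on the patch, so Poincar\'e on~$\widetilde K_j$ upgrades these to $\|v - \ell_j v\|_{L^2(K_j)} \preccurlyeq h_j \|v'\|_{L^2(\widetilde K_j)}$ and $\|(v - \ell_j v)'\|_{L^2(K_j)} \preccurlyeq \|v'\|_{L^2(\widetilde K_j)}$. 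For the bubble part, the Lobatto basis $\{\phi_k\}_{k \ge 2}$ on the reference interval satisfies $\phi_k' = L_{k-1}$, so the $H^1_0$-seminorm diagonalizes in this basis and truncation at degree~$p_j$ is an $H^1$-projection (hence $H^1$-non-expanding); the sharp $L^2$-gain follows from the Poincar\'e-type estimate $\|g\|_{L^2(\hat K)} \preccurlyeq p^{-1}\|g'\|_{L^2(\hat K)}$, valid for polynomials $g \in H^1_0(\hat K)$ whose derivative has Legendre expansion starting at mode~$p$ (a direct computation using the antiderivative identity $\int L_k = (L_{k+1} - L_{k-1})/(2k+1)$).

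The main technical obstacle is reconciling $L^2$-stability (with constant independent of~$p_j$) with the sharp $h_j p_j^{-1}$ $L^2$-approximation rate: the former forces a low-order Scott--Zhang averaging at the nodes, while the latter naturally calls for higher-order matching of~$v$ in the interior. In 1D the two are reconciled precisely by the additive decomposition $\intp v|_{K_j} = \ell_j v + b_j v$: the Lobatto truncation acts only on the bubble component of~$v$ and is simultaneously $L^2$- and $H^1$-bounded with constants independent of~$p_j$ thanks to the diagonalization properties of the Lobatto / Legendre basis, which decouples the node- and interior-related contributions to the error. Combining the nodal and bubble estimates and summing via $\mu$-shape-regularity then yields~\eqref{eq:CI} with $C_I$ depending only on~$\mu$.
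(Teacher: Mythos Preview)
Your additive Scott--Zhang plus Lobatto-bubble construction does not deliver~\eqref{eq:CI}; the tension you identify in the last paragraph is real, but your resolution fails on both alternatives of the $L^2$-estimate. The bubble map $b_j v = T_{p_j}(v|_{K_j} - I_1 v)$ needs the \emph{exact} nodal interpolant $I_1$ to produce an $H^1_0(K_j)$-input for the Lobatto truncation, and $I_1$ is not $L^2$-bounded. Concretely, let $v_\epsilon$ be a tent of height~$1$ and base~$2\epsilon$ centred at an interior node~$x_j$: then $\NN{v_\epsilon}_{L^2(\widetilde K_j)}\sim\sqrt\epsilon\to 0$ and $\ell_j v_\epsilon\to 0$, yet integrating by parts in the Lobatto coefficients of $v_\epsilon - I_1 v_\epsilon$ (using $v_\epsilon(x_j)=1$) shows that $b_j v_\epsilon$ converges to a fixed nonzero polynomial in $\mathbb P_{p_j}\cap H^1_0(K_j)$. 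Hence $\NN{v_\epsilon-\intp v_\epsilon}_{L^2(K_j)}\not\to 0$, and the bound $\NN{v-\intp v}_{L^2(K_j)}\le C_I\NN{v}_{L^2(\widetilde K_j)}$ is violated. The second alternative fails for a different reason: the nodal values of $\intp v$ are the averages $\mathcal N_j(v)$, and since your dual functionals reproduce only linears, the discrepancy $v(x_j)-\mathcal N_j(v)$ is generically of order $h_j^{1/2}\NN{v'}_{L^2(\widetilde K_j)}$. This produces a linear contribution $I_1 v-\ell_j v$ of size $O(h_j\NN{v'}_{L^2(\widetilde K_j)})$ to $v-\intp v$ that the bubble---vanishing at the nodes---cannot touch; no $p_j^{-1}$ gain is possible there. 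In short, you are gluing an $L^2$-stable nodal part to an $H^1$-stable bubble part via two \emph{different} linear interpolants ($\ell_j$ versus $I_1$), and the mismatch between them is exactly what breaks both estimates.

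The paper avoids separating nodal and bubble parts altogether. On each vertex patch~$\omega_i$ it uses a \emph{single} polynomial approximation $J^i v\in\mathbb P_{p'_i-1}(\omega_i)$, obtained by scaling a reference operator $J_p$ on $(-1,1)$ that is simultaneously uniformly $L^2$-stable and satisfies $(p+1)\NN{v-J_p v}_{L^2}+\NN{(v-J_p v)'}_{L^2}\le C\NN{v'}_{L^2}$ (such operators exist; see~\cite[Prop.~A.2]{MelenkCLEM}). These patchwise approximants are then glued with the piecewise-linear partition of unity, $\intp v=\sum_i\varphi_i\,J^i v$; the hat functions cost one polynomial degree and a factor $h_j^{-1}$ in the derivative, which the approximation rate of $J^i$ absorbs. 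Homogeneous boundary conditions are enforced by antisymmetric reflection on the two boundary patches, forcing $(J^0 v)(a)=(J^N v)(b)=0$. The key point is that each local operator $J^i$ already carries both the $L^2$-stability and the $(p/h)$-approximation, so nothing has to be reconciled after the fact.
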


\begin{proof} 
Let us, without loss of generality, assume that~$\Omega=(0,1)$. The result can be shown with 
the techniques employed in the higher-dimensional case in \cite[Cor.~{3.7}]{karkulik-melenk15}. In the present, 
one-dimensional case, a significantly simpler argument can be brought to bear, which allows us to 
make the paper self-contained. Let $x_{-1} = -h_1$ and 
$x_{N+1} = 1+h_N$ and $\varphi_i$, $i=0,\ldots,N+1$ be the 
standard piecewise linear hat functions associated with the nodes 
$x_i$, $i=-1,\ldots,N+1$. The extra nodes $x_{-1}$ and $x_{N+1}$ define 
in a natural way the elements $K_{0}$ and $K_{N+1}$. 
The (open) patches $\omega_i$, $i=0,\ldots,N$, are given by the supports of the 
functions $\varphi_i$, i.e., $\omega_i = (\operatorname*{supp} \varphi_i)^\circ
 = K_i \cup K_{i+1} \cup \{x_i\}$. 

Polynomial approximation (see, e.g., \cite[Proposition~{A.2}]{MelenkCLEM}) gives
the existence of an interpolation operator 
$J_p:L^2(-1,1) \rightarrow {\mathbb P}_p(-1,1)$ that is uniformly 
(in $p\ge 0$)
stable, i.e., $\|J_p v\|_{L^2(-1,1)} \leq C \|v\|_{L^2(-1,1)}$ for all 
$v \in L^2(-1,1)$ and has the following properties for $v \in H^1(-1,1)$: 
\[
(p+1) \|v - J_p v\|_{L^2(-1,1)} + 
\|(v - J_p v)^\prime\|_{L^2(-1,1)} \leq  C \|v^\prime\|_{L^2(-1,1)}.  
\]
Furthermore, if $v$ is antisymmetric with respect to the midpoint $x = 0$, 
then $J_p v$ can be assumed to be antisymmetric as well, i.e., 
$(J_p v)(0) = 0$ (this follows from studying the antisymmetric part 
of the original function $J_p v$). 

The approximation $\intp v$ is now constructed with 
the aid of a ``partition of unity argument'' as described in 
\cite[Theorem~{2.1}]{babuska-melenk96}. For $\omega_0$ and $\omega_N$, 
extend $v$ anti-symmetrically, i.e., $v(x):=-v(-x)$ for $x \in K_{0}$
and $v(x):=-v(1-x)$ for $x \in K_{N+1}$. Then $v$ is defined on each
patch $\omega_i$, $i=0,\ldots,N$. For each patch $\omega_i$, 
let $p^\prime_i:= \min\{p_{i},p_{i+1}\}$ (with the understanding 
$p_0 = p_1$ and $p_{N+1} = p_N$). The above operator $J_p$ then induces
for each patch $\omega_i$ by scaling an operator 
$J^i:L^2(\omega_i) \rightarrow {\mathcal P}_{p^\prime_i-1}(\omega_i)$ 
with the following properties:  
\[
\frac{p_i^\prime+1}{h_i}\|v - J^i v\|_{L^2(\omega_i)} + 
\|(v - J^i v)^\prime\|_{L^2(\omega_i)} \leq  C \|v^\prime\|_{L^2(\omega_i)};  
\]
here, we have exploited the $\mu$-shape regularity of the mesh. 
We note that $(J^0 v)(0) = 0$ and $(J^N v)(1) = 0$. Also, the operators 
$J^i$ are uniformly (in the polynomial degree) stable in $L^2(\omega_i)$. 
The approximation $\intp v$ is now taken to be 
$\intp v := \sum_{i=0}^N \varphi_i J^i v$. The desired approximation 
properties follow now from \cite[Theorem~{2.1}]{babuska-melenk96}. 
%
\end{proof}

The above proposition implies the following bounds.

\begin{corollary}\label{cr:int}
For~$v\in H^1_0(\Omega)$, the interpolant from Proposition~\ref{pr:intp} satisfies
\begin{equation*}
\NN{v-\intp v}^2_{L^2(K_j)}\le
C^2_I\alpha_j\NNN{v}_{\e,\widetilde K_j}^2 ,
\qquad j=1,2,\ldots,N,
\end{equation*}
and
\begin{equation*}
\big|(v-\intp v)(x_j)\big|^2\le
C_I^2\gamma_j\left(\NNN{v}_{\e,\widetilde K_{j}}^2+\NNN{v}^2_{\e,\widetilde K_{j+1}}\right),
\qquad j=1,2,\ldots,N-1.
\end{equation*}
Here, we let
\begin{align}\label{eq:alpha}
\alpha_j&=
\min\left\{\e^{-1}h_j^2p_j^{-2},1\right\},\qquad j=1,\ldots, N,
\end{align}
and, with
\begin{align}
\beta_j&=\alpha_jh_j^{-1}+2\sqrt{\e^{-1}\alpha_j},\label{eq:beta}
\end{align}
we define
\begin{equation}\label{eq:gamma}
\gamma_j=\frac{\beta_j\beta_{j+1}}{\beta_j+\beta_{j+1}},
\end{equation}
for~$1\le j\le N-1$, and~$\gamma_0=\gamma_N=0$. 
Moreover, $C_I$ is the constant from~\eqref{eq:CI}.
\end{corollary}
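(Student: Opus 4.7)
The plan is to extract both bounds directly from Proposition~\ref{pr:intp} by combining the two estimates in \eqref{eq:CI} with a simple one-dimensional trace inequality.

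For the $L^2$-bound, I would note that \eqref{eq:CI} gives two separate estimates
\[
\NN{v-\intp v}^2_{L^2(K_j)}\le C_I^2\NN{v}^2_{L^2(\widetilde K_j)},\qquad
\NN{v-\intp v}^2_{L^2(K_j)}\le C_I^2 h_j^2p_j^{-2}\NN{v'}^2_{L^2(\widetilde K_j)},
\]
and that \emph{both} right-hand sides are dominated by $C_I^2\alpha_j\NNN{v}_{\e,\widetilde K_j}^2$. Indeed, if $\alpha_j=1$ then $\NN{v}^2_{L^2(\widetilde K_j)}\le\NNN{v}^2_{\e,\widetilde K_j}$ is immediate, while if $\alpha_j=\e^{-1}h_j^2p_j^{-2}$ then $h_j^2p_j^{-2}\NN{v'}^2_{L^2(\widetilde K_j)}=\alpha_j\,\e\NN{v'}^2_{L^2(\widetilde K_j)}\le\alpha_j\NNN{v}^2_{\e,\widetilde K_j}$. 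Taking the smaller of the two estimates in each case yields the first claim.

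For the nodal estimate, I would invoke the elementary one-dimensional multiplicative trace inequality
\[
|w(x_j)|^2\le h_j^{-1}\NN{w}^2_{L^2(K_j)}+2\NN{w}_{L^2(K_j)}\NN{w'}_{L^2(K_j)},\qquad w\in H^1(K_j),
\]
which follows by averaging the identity $w(x_j)^2=w(x)^2+2\int_x^{x_j}ww'\,\d t$ over $x\in K_j$ and applying Cauchy--Schwarz. Setting $w=v-\intp v$, plugging in the first part of the corollary for $\NN{w}^2_{L^2(K_j)}$ and combining the $H^1$-bound of \eqref{eq:CI} with the trivial estimate $\NN{v'}_{L^2(\widetilde K_j)}\le\e^{-1/2}\NNN{v}_{\e,\widetilde K_j}$, I obtain
\[
|w(x_j)|^2\le C_I^2\bigl(\alpha_jh_j^{-1}+2\sqrt{\e^{-1}\alpha_j}\bigr)\NNN{v}^2_{\e,\widetilde K_j}=C_I^2\beta_j\NNN{v}^2_{\e,\widetilde K_j}.
\]
Repeating the same argument on $K_{j+1}$ (at which $x_j$ is the left endpoint) gives the companion estimate $|w(x_j)|^2\le C_I^2\beta_{j+1}\NNN{v}^2_{\e,\widetilde K_{j+1}}$.

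Finally, to fuse the two estimates into the symmetric form involving $\gamma_j$, I would take a convex combination with weights $\theta=\beta_{j+1}/(\beta_j+\beta_{j+1})$ and $1-\theta=\beta_j/(\beta_j+\beta_{j+1})$; this is chosen precisely so that $\theta\beta_j=(1-\theta)\beta_{j+1}=\gamma_j$, giving the stated bound with the harmonic mean $\gamma_j=\beta_j\beta_{j+1}/(\beta_j+\beta_{j+1})$. The boundary cases $j=0$ and $j=N$ are trivial because both $v\in H^1_0(\Omega)$ and $\intp v\in\V\subset H^1_0(\Omega)$ vanish at $a$ and $b$, which is consistent with the convention $\gamma_0=\gamma_N=0$. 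There is no real obstacle here; the only genuinely computational step is matching the weights so that the coefficients in front of the two energy terms coincide.
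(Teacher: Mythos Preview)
Your proposal is correct and follows essentially the same route as the paper's own proof: both arguments split the $L^2$-estimate into the two cases implicit in the minimum defining $\alpha_j$, invoke the same multiplicative trace inequality (which the paper cites from \cite{MelenkWihler:15} while you derive it directly), and then average the two one-sided nodal bounds with the weights $\beta_{j+1}/(\beta_j+\beta_{j+1})$ and $\beta_j/(\beta_j+\beta_{j+1})$ to produce the harmonic mean $\gamma_j$. Your additional remark on the boundary cases $j=0,N$ is a small bonus not spelled out in the paper.
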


\begin{proof}
We proceed along the lines of~\cite{Ve1998}. Using the bounds from
Proposition~\ref{pr:intp}, for each element~$K_j\in\T$, we have that
\begin{align*}
\NN{v-\intp v}^2_{L^2(K_j)}&\le 
C^2_I\frac{h_j^2}{\e p_j^2}\e\NN{v'}^2_{L^2(\widetilde K_j)}.
\end{align*}
Furthermore, 
\begin{align*}
\NN{v-\intp v}^2_{L^2(K_j)}
&\le C^2_I\NN{v}_{L^2(\widetilde K_j)}^2.
\end{align*}
Combining these two estimates, yields the first bound.
 
In order to prove the second estimate, we apply, for~$1\le j\le N-1$,
a multiplicative trace inequality; see~\cite[Lemma~A.1]{MelenkWihler:15}):
\begin{align*}
\left|(v-\intp v)(x_j)\right|^2
&\le h_j^{-1}\NN{v-\intp v}^2_{L^2(K_j)}\\
&\quad+2\NN{v-\intp
  v}_{L^2(K_j)}\NN{(v-\intp v)'}_{L^2(K_j)}.
\end{align*}
Then, invoking the above bounds as well as the estimates from
Proposition~\ref{pr:intp}, we get 
\begin{align*}
\big|(v-\intp v)(x_j)\big|^2
&\le C_I^2\left(\alpha_jh_j^{-1}\NNN{v}_{\e,\widetilde K_j}^2
+2\sqrt{\alpha_j}\NNN{v}_{\e,\widetilde K_j}\NN{v'}_{L^2(\widetilde K_j)}\right)\\
&\le C_I^2\left(\alpha_jh_j^{-1}\NNN{v}_{\e,\widetilde K_j}^2
+2\sqrt{\e^{-1}\alpha_j}\NNN{v}_{\e,\widetilde K_j}^2\right)\\
&\le C_I^2\beta_j\NNN{v}_{\e,\widetilde K_j}^2,
\end{align*}
with $\beta_j$ from~\eqref{eq:beta}. Since~$x_j$ is also a boundary
point of~$K_{j+1}$, we similarly obtain that
\[
\big|(v-\intp v)(x_j)\big|^2
\le C_I^2\beta_{j+1}\NNN{v}_{\e,\widetilde K_{j+1}}^2.
\]
Therefore,
\begin{align*}
\big|(v-\intp v)(x_j)\big|^2
&=\frac{\beta_{j+1}}{\beta_{j}+\beta_{j+1}}\big|(v-\intp
v)(x_j)\big|^2\\
&\quad +\frac{\beta_{j}}{\beta_{j}+\beta_{j+1}}\big|(v-\intp
v)(x_j)\big|^2\\ &\le
C_I^2\gamma_j\left(\NNN{v}_{\e,\widetilde K_{j}}^2+\NNN{v}^2_{\e,\widetilde K_{j+1}}\right),
\end{align*}
with~$\gamma_j$ from~\eqref{eq:gamma}. Thus, we have shown the second
estimate.
\end{proof}

\subsection{Robust \emph{A Posteriori} Residual Estimate}

Based on the previous interpolation analysis in Section~\ref{sc:int}, we are now in a position to prove the following main result.

\begin{theorem}\label{thm:main}
\label{thm:1}
For~$u_{n+1}^{(\Delta t,\hp)}$ from~\eqref{eq:ut} there holds the upper \emph{a posteriori} bound
\begin{equation}
\label{eq:upperbound}
\NNN{\F_\e(u_{n+1}^{(\Delta t,\hp)})}_{X',\e}^2
\preccurlyeq \delta_{n,\Omega}^2+\sum_{j=1}^N{\eta_{n,j}^{2}},
\end{equation}
with
\begin{equation}
\label{Femerror}
\begin{split}
\eta_{n,j}^2:&= \alpha_{j} \norm{f^{\Delta t}(\u_{n+1})+\e (u_{n+1}^{(\Delta t,\hp)})''}_{0,K_j}^2\\
&\quad+\frac{1}{2}\e^2\gamma_j\left|\jmp{(u_{n+1}^{(\Delta t,\hp)})'}(x_j)\right|^2
+\frac{1}{2}\e^2\gamma_{j-1}\left|\jmp{(u_{n+1}^{(\Delta t,\hp)})'}(x_{j-1})\right|^2,
\end{split}
\end{equation}
for~$1\le j\le N$, and
\begin{equation}
\label{Newtonerror}
\delta_{n,\Omega}:=\norm{f^{\Delta t}(u_{n+1}^{\hp})-f(u_{n+1}^{(\Delta t,\hp)})}_{0,\Omega}.
\end{equation}
Here, $\alpha_j$ and~$\gamma_j$ are defined in~\eqref{eq:alpha} and~\eqref{eq:gamma}, respectively, and we set~$\gamma_0=\gamma_N=0$.
\end{theorem}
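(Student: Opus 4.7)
The plan is to estimate the dual norm
$\NNN{\F_\e(u_{n+1}^{(\Delta t,\hp)})}_{X',\e} = \sup_{v\in X,\,\NNN{v}_\e=1} \langle \F_\e(u_{n+1}^{(\Delta t,\hp)}),v\rangle$
by a residual-type argument in the spirit of \cite{Ve1998}, splitting off the Newton/nonlinearity mismatch and then exploiting Galerkin orthogonality together with the local interpolation bounds of Corollary~\ref{cr:int}. Fix $v\in X$ with $\NNN{v}_\e = 1$. The first step is to add and subtract $f^{\Delta t}(\u_{n+1})$ in the integrand:
\begin{equation*}
\langle \F_\e(u_{n+1}^{(\Delta t,\hp)}),v\rangle
=\int_\Omega\bigl\{\e(u_{n+1}^{(\Delta t,\hp)})'v'-f^{\Delta t}(\u_{n+1})v\bigr\}\dx
+\int_\Omega\bigl\{f^{\Delta t}(\u_{n+1})-f(u_{n+1}^{(\Delta t,\hp)})\bigr\}v\dx.
\end{equation*}
The second integral is controlled by Cauchy--Schwarz and the Poincar\'e inequality~\eqref{eq:P}, yielding the $\delta_{n,\Omega}$ contribution (up to a constant).

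Next, I want to use Galerkin orthogonality. Rewriting the FEM identity~\eqref{eq:fem} in terms of $u_{n+1}^{(\Delta t,\hp)}$ and $f^{\Delta t}(\u_{n+1})$ gives precisely \eqref{start}, i.e.\
$\int_\Omega\{\e(u_{n+1}^{(\Delta t,\hp)})'v_h'-f^{\Delta t}(\u_{n+1})v_h\}\dx=0$ for every $v_h\in\V$. Subtracting $v_h=\intp v$ (with $\intp$ from Proposition~\ref{pr:intp}) from $v$ in the first integral above and then performing integration by parts on each element $K_j$, the boundary terms at the outer endpoints vanish because $v-\intp v\in H^1_0(\Omega)$, and the interior endpoint contributions reassemble into jump terms. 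This yields
\begin{equation*}
\langle \F_\e(u_{n+1}^{(\Delta t,\hp)}),v\rangle
=\sum_{j=1}^N\int_{K_j}\bigl\{-\e(u_{n+1}^{(\Delta t,\hp)})''-f^{\Delta t}(\u_{n+1})\bigr\}(v-\intp v)\dx
-\sum_{j=1}^{N-1}\e\jmp{(u_{n+1}^{(\Delta t,\hp)})'}(x_j)(v-\intp v)(x_j)
+\int_\Omega\bigl\{f^{\Delta t}(\u_{n+1})-f(u_{n+1}^{(\Delta t,\hp)})\bigr\}v\dx.
\end{equation*}

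Now I apply Cauchy--Schwarz termwise together with the two bounds from Corollary~\ref{cr:int}: $\|v-\intp v\|_{0,K_j}\le C_I\sqrt{\alpha_j}\NNN{v}_{\e,\widetilde K_j}$ for the bulk piece, and $|(v-\intp v)(x_j)|^2\le C_I^2\gamma_j(\NNN{v}^2_{\e,\widetilde K_j}+\NNN{v}^2_{\e,\widetilde K_{j+1}})$ for each interior node. To attribute the jump contribution to the two elements meeting at $x_j$, I split each interior jump evenly, contributing $\tfrac12\e^2\gamma_j|\jmp{(u_{n+1}^{(\Delta t,\hp)})'}(x_j)|^2$ to both $\eta_{n,j}^2$ (as the term at the right endpoint) and $\eta_{n,j+1}^2$ (as the term at the left endpoint), which is precisely the form of~\eqref{Femerror}. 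A discrete Cauchy--Schwarz over $j=1,\ldots,N$ together with the finite-overlap property of the patches $\widetilde K_j$ (a direct consequence of the one-dimensional setting) reduces the patch-sums $\sum_j\NNN{v}^2_{\e,\widetilde K_j}$ to $\NNN{v}^2_\e=1$, and taking the supremum over $v$ finishes the argument.

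The main bookkeeping obstacle is the precise distribution of the jumps between adjacent elements so that $\gamma_j$ and $\gamma_{j-1}$ appear with the stated prefactors $\tfrac12$ in $\eta_{n,j}^2$; the boundary conditions $\gamma_0=\gamma_N=0$ fall out naturally because the outer boundary terms vanish. Apart from this, the proof is a routine assembly of Galerkin orthogonality, elementwise integration by parts, and the $\e$-robust interpolation estimates established above.
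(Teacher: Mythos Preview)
Your argument is essentially the paper's proof: the paper also cites the residual identity (there attributed to \cite[Proposition~4.3]{AmreinWihler:15}) decomposing $\langle\F_\e(u_{n+1}^{(\Delta t,\hp)}),v\rangle$ into the jump terms~$a_j$, the elementwise residual terms~$b_j$, and the linearization mismatch~$c_j$, and then bounds each piece with Corollary~\ref{cr:int} and discrete Cauchy--Schwarz, exactly as you outline.

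One small slip: for the $\delta_{n,\Omega}$ contribution you invoke the Poincar\'e inequality~\eqref{eq:P}, but that would produce a factor $C_P\e^{-1/2}$ when passing from $\|v\|_0$ to $\NNN{v}_\e$, destroying the $\e$-robustness encoded in~$\preccurlyeq$. The paper (and you should) simply use the trivial bound $\|v\|_{0}\le\NNN{v}_\e$, which follows directly from the definition $\NNN{v}_\e^2=\e\|v'\|_0^2+\|v\|_0^2$; no Poincar\'e is needed here.
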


\begin{remark}
We emphasize that the constants~$\alpha_j$ and~$\e^2\alpha_j\gamma_j$ appearing in the error indicators~$\eta_{K_j}$ from~\eqref{Femerror} remain bounded as~$h_j,\e\to 0$ (and~$p_j\to\infty$). In addition, we note that $\nicefrac{1}{2} \min\{\beta_j,\beta_{j+1}\} \leq \gamma_j \leq \min\{\beta_j,\beta_{j+1}\}$, and that 
$2\sqrt{\e^{-1} \alpha_j } \leq \beta_j \leq 3 \sqrt{\e^{-1} \alpha_j}$.
\eremk
\end{remark}

\begin{remark}
For \emph{linear problems}, the (adaptive) Newton linearization is redundant. In particular, upon setting~$\Delta t=1$, we infer that~$\delta_{n,\Omega}=0$ in~\eqref{eq:upperbound}, and the residual is bounded by the local error indicators~$\eta_{n,j}$ only. These quantities, in turn, can be exploited for the purpose of designing an $hp$-adaptive mesh refinement strategy, as outlined, for example, in Section~\ref{sc:hpadapt} below. We emphasize that this approach has been studied earlier in the report~\cite{MelenkWihler:15}, where a number of numerical tests in the context of $hp$-adaptive FEM for singularly perturbed linear problems have been performed; in particular,  these experiments have underlined the robustness of the bound~\eqref{eq:upperbound} as~$\e\to 0$ in the linear case. In Section~\ref{sc:adaptivity}, we will extend this idea by proposing a suitable interplay between $hp$-mesh refinements and the (adaptive) Newton linearization from Algorithm~\ref{al:zs}.
\eremk
\end{remark}

\begin{proof}[Proof of Theorem~\ref{thm:main}]
Given~$u_{n+1}^{(\Delta t,\hp)}$ and~$f^{\Delta t}(\u_{n+1})$ from~\eqref{eq:ut} and~\eqref{eq:ft}, respectively, and any $v\in H^1_0(\Omega)$, there holds the identity
\begin{equation}\label{eq:Fres}
\left \langle \F_{\e}(u_{n+1}^{(\Delta t,\hp)}),v \right \rangle
=\sum_{j=1}^{N-1}a_j+\sum_{j=1}^N(b_j+c_j),
\end{equation}
where
\begin{alignat}{2}
a_j&:= \e\jmp{(u_{n+1}^{(\Delta t,\hp)})'}(x_j)(v-\intp v)(x_j),&\qquad&1\le j\le N-1,\nonumber\\
b_j&:=\int_{K_j}{\left\{f^{\Delta t}(\u_{n+1})+\e (u_{n+1}^{(\Delta t,\hp)})''\right\}(\intp v-v)}\dx,&&1\le j\le N,\label{bounds}\\
c_j&:=\int_{K_j}{\left\{f^{\Delta t}(\u_{n+1})-f(u_{n+1}^{(\Delta t,\hp)})\right\}v}\dx,&&1\le j\le N;\nonumber
\end{alignat}
see \cite[Proposition~4.3]{AmreinWihler:15}.

We estimate the terms~$a_j$, $b_j$, and $c_j$ individually. First let~$1\le j\le N-1$. Then, $a_j$ from~\eqref{bounds} can be estimated using Corollary~\ref{cr:int} as follows:
\begin{align*}
|a_j|&\le \e\left|\jmp{(u_{n+1}^{(\Delta t,\hp)})'}(x_j)\right||(v-\intp v)(x_j)|\\
&\preccurlyeq\gamma_j^{\nicefrac12}\e\left|\jmp{(u_{n+1}^{(\Delta t,\hp)})'}(x_j)\right|\left(\NNN{v}^2_{\e,\widetilde K_j}+\NNN{v}^2_{\e,\widetilde{K}_{j+1}}\right)^{\nicefrac12}.
\end{align*}
Applying the Cauchy-Schwarz inequality leads to
\begin{align*}
\left|\sum_{j=1}^{N-1}a_j\right|
&\preccurlyeq\left(\sum_{j=1}^{N-1}{\e^2\gamma_j\left|\jmp{(u_{n+1}^{(\Delta t,\hp)})'}(x_j)\right|^2}\right)^{\nicefrac12}
\left(\sum_{j=1}^{N-1}\left(\NNN{v}^2_{\e,\widetilde K_j}+\NNN{v}^2_{\e,\widetilde{K}_{j+1}}\right)\right)^{\nicefrac12}\\
&\preccurlyeq\left(\sum_{j=1}^{N-1}{\e^2\gamma_j\left|\jmp{(u_{n+1}^{(\Delta t,\hp)})'}(x_j)\right|^2}\right)^{\nicefrac12}\NNN{v}_{\e}.
\end{align*}
Furthermore, again using Corollary~\ref{cr:int}, we see that
\begin{align*}
\left|\sum_{j=1}^Nb_j\right|
&\preccurlyeq\sum_{j=0}^N\alpha_j^{\nicefrac12}\norm{f^{\Delta t}(\u_{n+1})+\e (u_{n+1}^{(\Delta t,\hp)})''}_{0,K_j}\NNN{v}_{\e,\widetilde K_j}\\
&\preccurlyeq\left(\sum_{j=1}^N\alpha_j\norm{f^{\Delta t}(\u_{n+1})+\e (u_{n+1}^{(\Delta t,\hp)})''}_{0,K_j}^2\right)^{\nicefrac12}\left(\sum_{j=1}^N\NNN{v}_{\e,\widetilde{K}_j}^2\right)^{\nicefrac12}\\
&\preccurlyeq\left(\sum_{j=1}^N\alpha_j\norm{f^{\Delta t}(\u_{n+1})+\e (u_{n+1}^{(\Delta t,\hp)})''}_{0,K_j}^2\right)^{\nicefrac12}\NNN{v}_{\e}.
\end{align*}
Similarly, there holds
\begin{align*}
\left|\sum_{j=1}^Nc_j\right|&
\preccurlyeq\sum_{j=1}^N \norm{f^{\Delta t}(\u_{n+1})-f(u_{n+1}^{(\Delta t,\hp)})}_{0,K_j}\norm{v}_{0,K_j}\\
&\preccurlyeq\left(\sum_{j=1}^N\norm{f^{\Delta t}(\u_{n+1})-f(u_{n+1}^{(\Delta t,\hp)})}_{0,K_j}^2\right)^{\nicefrac12}\NNN{v}_{\e}.
\end{align*}
Now, applying the Cauchy-Schwarz inequality to~\eqref{eq:Fres} we see that
\[
\begin{aligned}
\abs{\left \langle \F_{\e}(u_{n+1}^{(\Delta t,\hp)}),v \right \rangle}
&\preccurlyeq\left(\delta_{n,\Omega}^2+\sum_{j=1}^N{\eta_{n,j}^{2}}\right)^{\nicefrac{1}{2}}\NNN{v}_{\e}.
\end{aligned}
\]
Dividing by~$\NNN{v}_{\e}$, and taking the supremum for all~$v\in H^1_0(\Omega)$, completes the proof.
\end{proof}

\begin{remark}
Suppose that there exist constants~$\lambda>-C_P^{-2}\e$ and~$L>0$, with~$C_P$ being the Poincar\'e constant from~\eqref{eq:P}, such that
\begin{align*}
(f(x)-f(y))(x-y)\le-\lambda(x-y)^2,\qquad
|f(x)-f(y)|\le L|x-y|,
\end{align*}
for all~$x,y\in\mathbb{R}$. Then, the residual norm $ \NNN{\F_\e(u_{n+1}^{(\Delta t,h)})}_{X',\e}$ and the energy norm $\NNN{u-u_{n+1}^{(\Delta t,h)}}_{\e}$ of the error are equivalent; see~\cite[Theorem~4.5]{AmreinWihler:15} for details.
\eremk
\end{remark}

\begin{remark}
Following along the lines of~\cite[\S4.4.2]{AmreinWihler:15} and~\cite[Appendix]{MelenkWihler:15}, it is possible to prove \emph{$\e$-robust local lower residual bounds} in terms of the error indicators~$\eta_{K_j}$ and the data oscillation terms. This approach, however, results in local efficiency bounds that will be slightly suboptimal with respect to the local polynomial degrees due to the need of applying $p$-dependent norm equivalences (involving cut-off functions).
\eremk
\end{remark}

\section{An $hp$-Adaptive Newton-Galerkin Procedure}\label{sc:adaptivity}

In this section, we will discuss how the \emph{a posteriori} bound from Theorem~\ref{thm:main} can be exploited for the purpose of designing an $hp$-adaptive Newton-Galerkin algorithm for the numerical solution of~\eqref{eq:poisson}. We will begin by revisiting an $hp$-adaptive testing strategy from~\cite{FaWiWi14} (see also~\cite{Wi11,Wi11_2}).

\subsection{$hp$-Adaptive Refinement Procedure}\label{sc:hpadapt}
In order to $hp$-refine the $hp$-finite element space $\V$, we shall apply an $hp$-adaptive algorithm which is based on the following two ingredients.

\subsubsection*{(a) Element marking:} The elementwise error indicators~$\eta_{n,j}$ from~Theorem~\ref{thm:main} are employed in order to mark elements for refinement. More precisely, we fix a parameter~$\vartheta\in(0,1)$, and select elements to be refined according to the \emph{D\"orfler marking} criterion:
\begin{equation}\label{eq:dorf}\tag{D}
\vartheta\sum_{j=1}^N\eta_{n,j}^2\le \sum_{j'=1}^M\eta_{n,j'}^2.
\end{equation}
Here, the indices~$j'$ are chosen such that the error indicators~$\eta_{j'}$ from~\eqref{Femerror} are sorted in descending order, and~$M$ is minimal.

\subsubsection*{(b) $hp$-refinement criterion:} The decision of whether a marked element in step~(a) is refined with respect to~$h$ (element bisection) or~$p$ (increasing the local polynomial order by~1) is based on a smoothness testing approach. Specifically, if the (numerical) solution is considered smooth on a marked element~$K_j$, then the polynomial degree is increased by~1 on that particular element (no element bisection), otherwise the element is bisected (retaining the current polynomial degree~$p_j$ on both subelements). In order to evaluate the smoothness of the solution~$\uu$ on a marked element~$K_j$, we employ an elementwise smoothness indicator as introduced in~\cite[Eq.~(3)]{FaWiWi14},
\begin{equation}\label{eq:hp}\tag{F}
\mathcal{F}^{p_j}_j\left[\uu\right]:=
\frac{\NN{\frac{\dd^{p_j-1}}{\dd x^{p_j-1}}\uu}_{L^\infty(K_j)}}{h_j^{-\nicefrac12}\NN{\frac{\dd^{p_j-1} \uu}{\dd x^{p_j-1}}}_{L^2(K_j)}+\frac{1}{\sqrt2}h_j^{\nicefrac12}\NN{\frac{\dd^{p_j} \uu}{\dd x^{p_j}}}_{L^2(K_j)}},
\end{equation}
if~$\frac{\dd^{p_j-1}}{\dd x^{p-1}}\uu|_{K_j}\not\equiv 0$, and~$\mathcal{F}^{p_j}_j[\uu]=1$ otherwise. Here, the basic idea is to consider the continuous Sobolev embedding~$H^1(K_j)\hookrightarrow L^\infty(K_j)$, which implies that
\[
\sup_{v\in H^1(K_j)}\frac{\NN{v}_{L^\infty(K_j)}}{h_j^{-\nicefrac12}\NN{v}_{L^2(K_j)}+\frac{1}{\sqrt2}h_j^{\nicefrac12}\NN{v'}_{L^2(K_j)}}\le 1;
\]
see~\cite[Proposition~1]{FaWiWi14}. In particular, it follows that~$\mathcal{F}_j^{p_j}[\uu]\le 1$. For ease of evaluation, note that, by taking the derivative of order~$p_j-1$ in the definition~\eqref{eq:hp}, the smoothness indicator~$\mathcal{F}^{p_j}_j[\uu]$ is evaluated for linear functions only; in this case, it can be shown that
\[
\frac12\approx\frac{\sqrt{3}}{\sqrt{6}+1}\le\mathcal{F}_j^{p_j}\left[\uu\right]\le 1;
\] 
cf.~\cite[Section~2.2]{FaWiWi14}. For a given smoothness testing parameter~$\zeta\in(\nicefrac{\sqrt{3}}{(\sqrt{6}+1)},1)$, the numerical solution~$\uu$ is classified smooth on~$K_j$ if
\begin{equation}\label{eq:st}
\mathcal{F}^{p_j}_j\left[\uu\right]\ge\zeta,
\end{equation} 
and otherwise nonsmooth. Incidentally, representing the local solution~$\uu|_{K_j}$ in terms of (local) Legendre polynomials (or more general Jacobi polynomials), any derivatives of~$\uu$ can be evaluated exactly by means of appropriate recurrence relations. For further information, we refer to the paper~\cite{FaWiWi14} (see also~\cite{Wi11_2,Wi11}), where the smoothness testing strategy described above has been introduced and discussed in detail.

\subsection{Fully Adaptive Newton-Galerkin Method}
We will now propose a procedure that provides an interplay of the adaptive Newton methods presented in~Section~\ref{sc:anewton} and automatic $hp$-finite element mesh refinements based on the {\em a posteriori} error estimate from Theorem~\ref{thm:1} (as outlined in the previous Section~\ref{sc:hpadapt}). To this end, we make the assumption that the Newton sequence $\left\{u_{n+1}^{(\Delta t_n,\hp)}\right\}_{n\ge 0} $ given by~\eqref{eq:fem} and~\eqref{eq:ut}, with a (possibly varying) step size~$\Delta t_n$, is well-defined as long as the iterations are being performed. 

\begin{algorithm}\label{al:full}
Given a (coarse) starting mesh $\mathcal{T}$ in~$\Omega$, with an associated (low-order) polynomial degree distribution~$\bm p$, and an initial guess $ u_{0}^{\hp} \in  \V $. Set~$n\gets 0$.
\begin{algorithmic}[1]
\State
Determine the Newton step size parameter $\Delta t_n$ based on~$\u_n$ by the adaptive procedure from Algorithm~\ref{al:zs}. 
\State 
Compute the FEM solution~$u_{n+1}^{\hp}$ from~\eqref{eq:fem} with step size~$\Delta t_n$ on the space~$\V$. Furthermore, obtain~$ u_{n+1}^{(\Delta t_n,\hp)} $ in~\eqref{eq:ut}, and evaluate the corresponding error indicators $ \{\eta_{n,j}\}_j $, and $\delta_{n,\Omega} $ from~\eqref{Femerror} and~\eqref{Newtonerror}, respectively.  
\If {
\begin{equation}\label{eq:test}
\delta_{n,\Omega}^2\le \theta\sum_{j}{\eta_{n,j}^2}
\end{equation}
holds,}
\myState {$hp$-refine the space $\V$ adaptively based on the marking criterion~\eqref{eq:dorf}, and using the $hp$-strategy from Section~\ref{sc:hpadapt}; repeat step~({\footnotesize\sc 2:}) with the new space~$\V$.}
\Else
\myState{i.e., if~\eqref{eq:test} is not fulfilled, then set~$n\leftarrow n+1$, and perform another adaptive Newton step by going back to~({\footnotesize\sc 1:}).}
\EndIf
\end{algorithmic}
\end{algorithm}

\subsection{Numerical Experiments}

Let us provide some comments on how Algorithm~\ref{al:full} is implemented in the ensuing examples.

\begin{itemize}
\item The Newton transform $\NF(\u_n)$ required for the computation of the step size parameter~$\Delta t_n$ in step~({\footnotesize1:}) is approximated using the $hp$-FEM on the current mesh.
\item If the criterion~\eqref{eq:test} in step~({\footnotesize3:}) is satisfied, the mesh is refined, and then step~({\footnotesize2:}) is repeated based on the previously computed solution~$u_{n+1}^{\hp}$ as interpolated on the refined mesh.
\item The linear systems resulting from the finite element discretization~\eqref{start} are solved by means of a direct solver; in this way, this approach differs from inexact Newton methods as discussed, e.g., in~\cite{ErVo13}; see also~\cite{5}.
\item The parameters are chosen to be $\gamma=0.5$ and $\tau=0.1$ in Algorithm~\ref{al:zs}, $\vartheta=0.5$ in~\eqref{eq:dorf}, $\zeta=0.6$ in~\eqref{eq:st}, and~$\theta=0.5$ in~\eqref{eq:test}. Unless stated differently, the procedure is initiated with a uniform initial mesh~$\T$ consisting of 10 elements, and a polynomial degree distribution~$\bm p=(1,\ldots 1)$.
\end{itemize}


\begin{example}\label{ex:slin0}
We begin by looking at the (non-singularly perturbed) \emph{Bratu} equation given by
\begin{equation}\label{eq:Bratu}
-u''=\exp(u+1)\quad\text{on }\Omega=(0,1),\qquad u(0)=u(1)=0.
\end{equation}
This problem features two positive and concave solutions, which we aim to find by means of the fully adaptive Newton-Galerkin procedure introduced above (cf.~also~\cite[Example~3.4]{AmreinWihler:14}). To this end, we let the initial guesses to be $\u_0\equiv0$ and~$\u_0(x)=\pi_1(10x(1-x))$, where~$\pi_1$ denotes the piecewise linear interpolation operator on the initial mesh. For these choices, Algorithm~\ref{al:full} generates the two solutions in bold and dashed lines, respectively, in Figure~\ref{fig:slin0} (left). Since both solutions are smooth, the $hp$-adaptive testing strategy~\eqref{eq:st} dictates pure $p$-refinements in all steps as expected, and, consequently, the residuals decay at an exponential rate with respect to the number of degrees of freedom; see Figure~\ref{fig:slin0} (right).
\eremk
\end{example}

\begin{figure}
\begin{center}
	\includegraphics[width=0.45\linewidth]{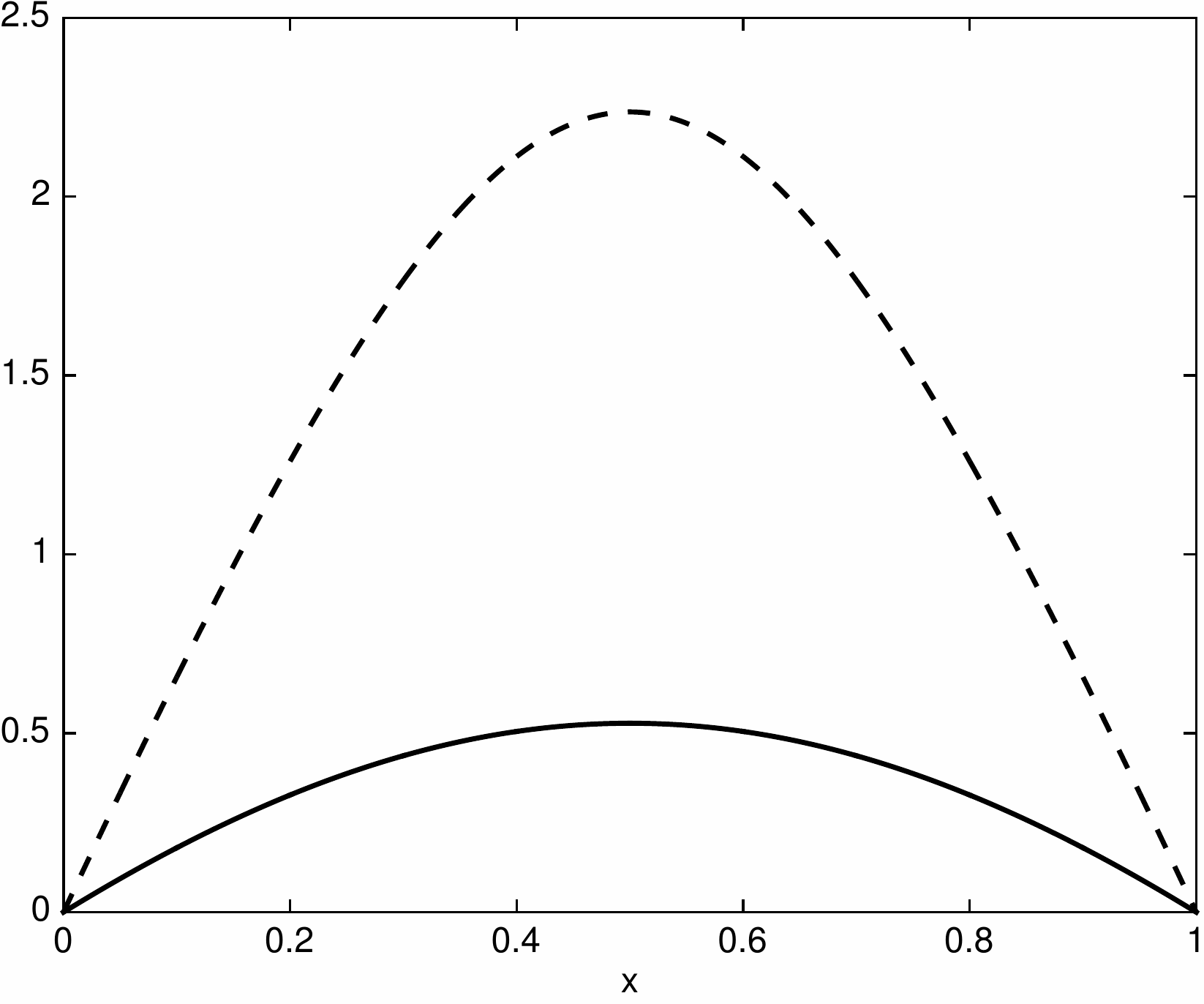}\hfill
	\includegraphics[width=0.45\linewidth,height=0.375\linewidth]{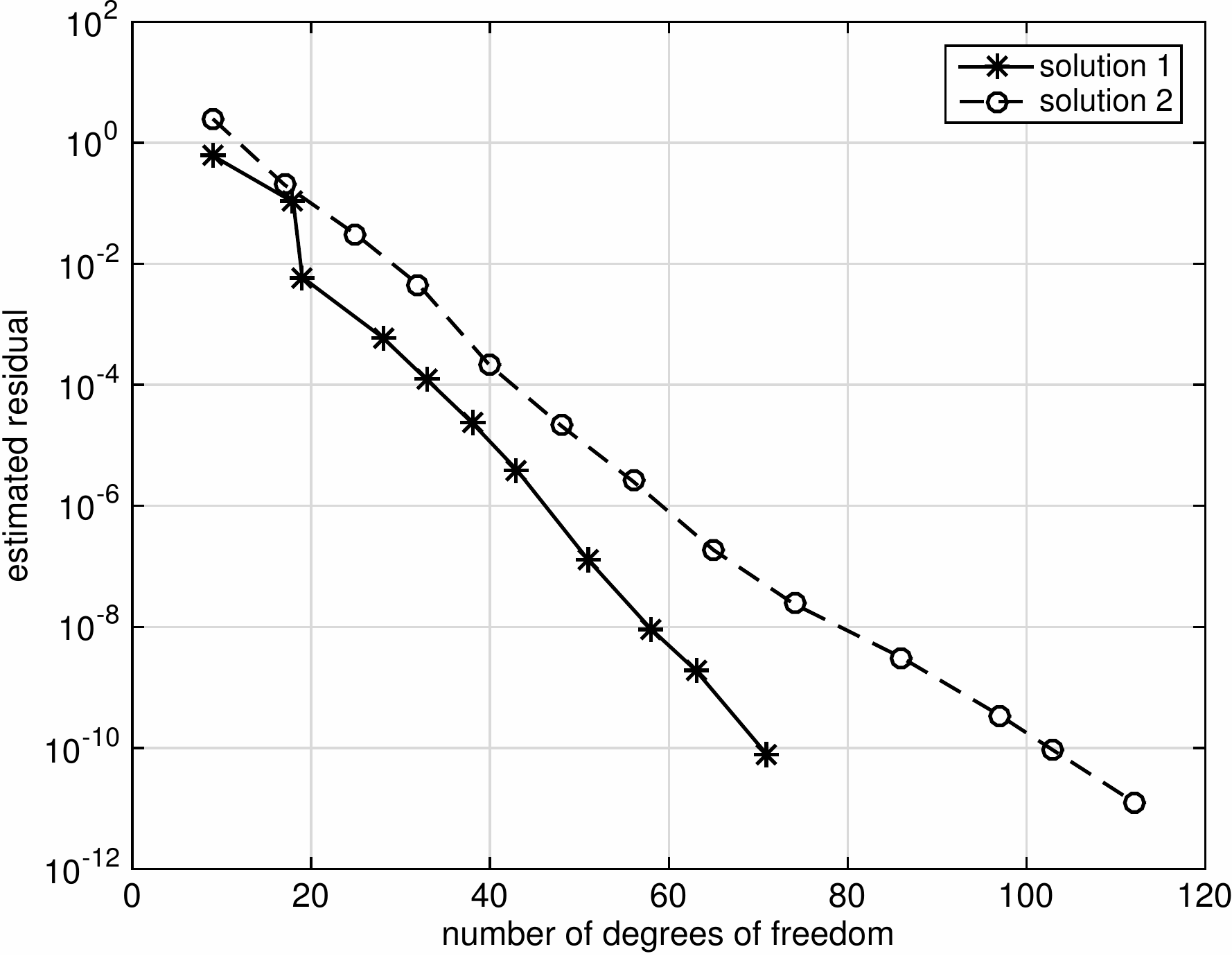}
\end{center}
\caption{Example~\ref{ex:slin0}: Two positive solutions of~\eqref{eq:Bratu} (left), and the corresponding performance data for the residuals (right).}
\label{fig:slin0}
\end{figure}


\begin{example}\label{ex:slin1}
The focus of the next sequence of experiments is on the robustness of the \emph{a posteriori} residual bound~\eqref{eq:upperbound} with respect to the singular perturbation parameter~$\e$ as~$\e\to 0$. To this end, let us consider the \emph{Ginzburg-Landau}-type equation,
\[
-\e u''=u-u^3\ \text{on} \ (0,1),\qquad u(0)=u(1)=0.
\]
Starting from the initial guess depicted in Figure~\ref{fig:slin1_1} (left), we test the fully adaptive Newton-Galerkin Algorithm~\ref{al:full} for different choices of~$\e=\{10^{-i}\}_{i=2}^5$. As~$\e\to 0$ the resulting solutions feature ever stronger boundary layers, as well as an interior shock close to the center of the domain; see Figure~\ref{fig:slin1_1} (right). The performance data in~Figure~\ref{fig:slin1_2} (left) shows that the residuals decay, firstly, fairly robust in~$\e$, and, secondly, exponentially fast with respect to the (square root) of the number of degrees of freedom (cf., e.g., \cite{Schwab98}). The final mesh for~$\e=10^{-5}$ is displayed in Figure~\ref{fig:slin1_2} (right); whilst the singular effects have been properly resolved by suitable $hp$-refined elements, we notice that the initial finite element space remains nearly unrefined in areas where the solution is approximately constant~$\pm 1$. 
\eremk
\end{example}

\begin{figure}
\begin{center}
	\includegraphics[width=0.45\linewidth]{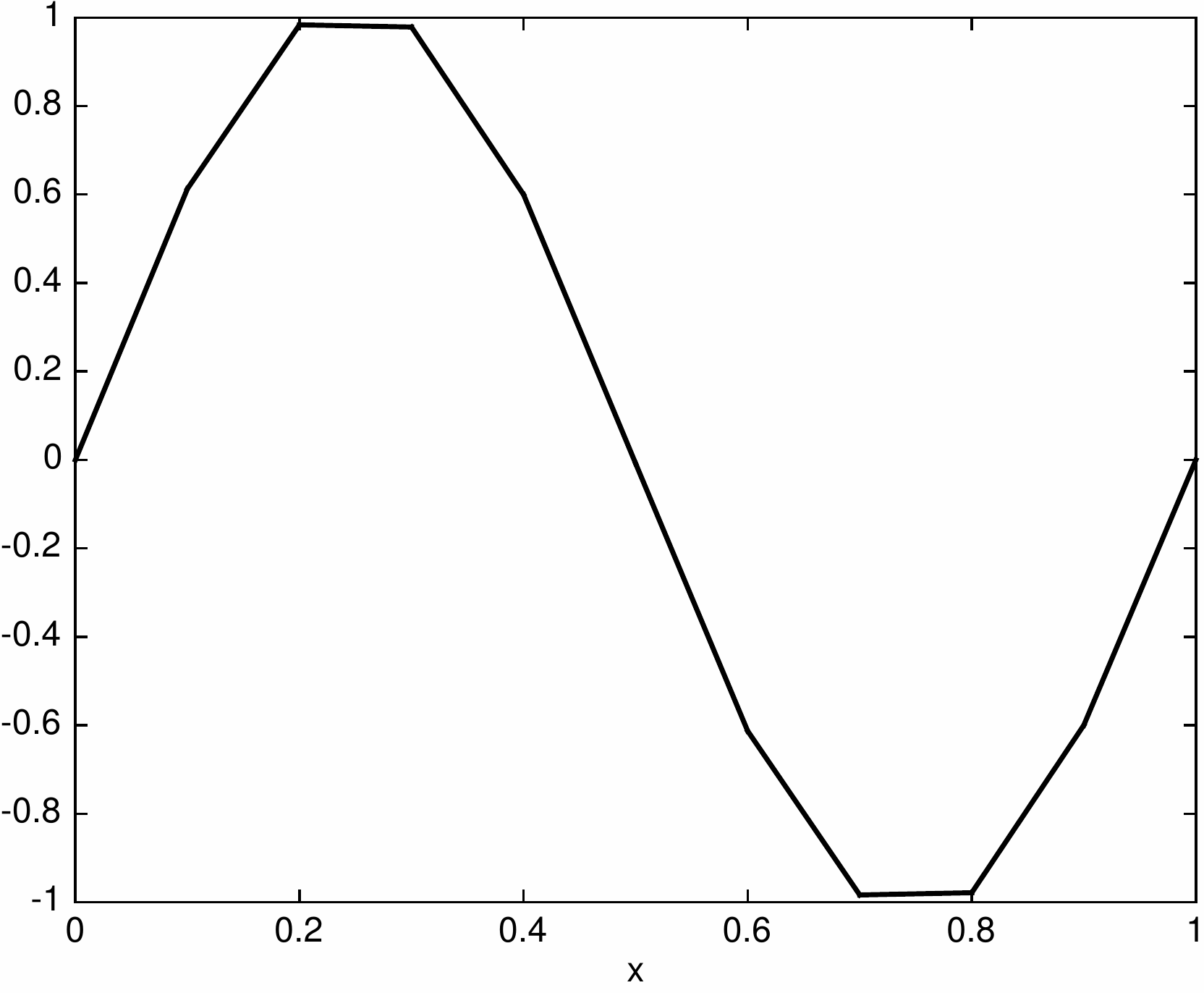}\hfill
	\includegraphics[width=0.45\linewidth]{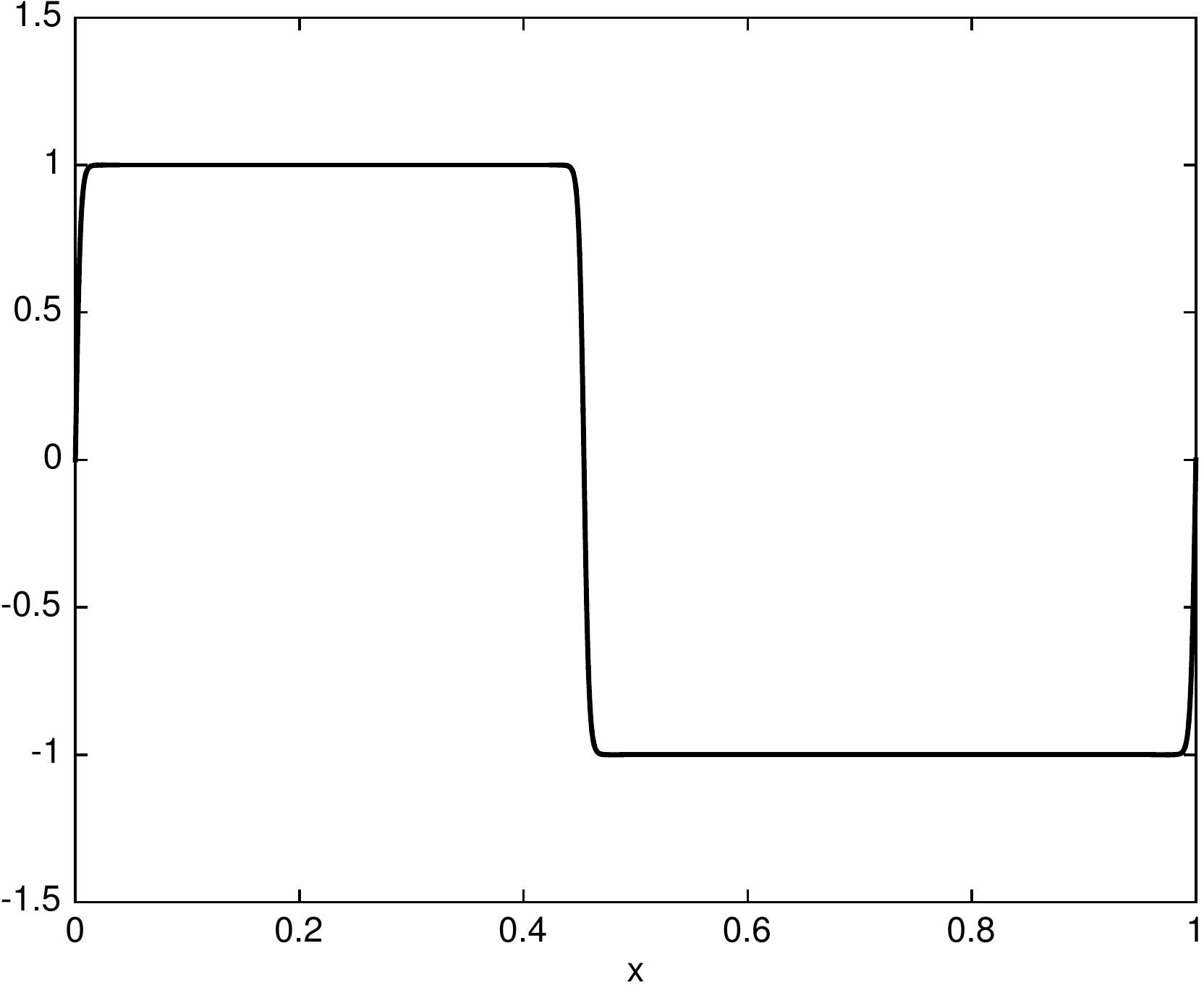}
\end{center}
\caption{Example~\ref{ex:slin1} for~$\e=10^{-5}$: Piecewise linear initial guess (left), and numerical solution with strong layers at the boundary and close to the center of the domain (right).}
\label{fig:slin1_1}
\end{figure}

\begin{figure}
\begin{center}
	\includegraphics[width=0.45\linewidth]{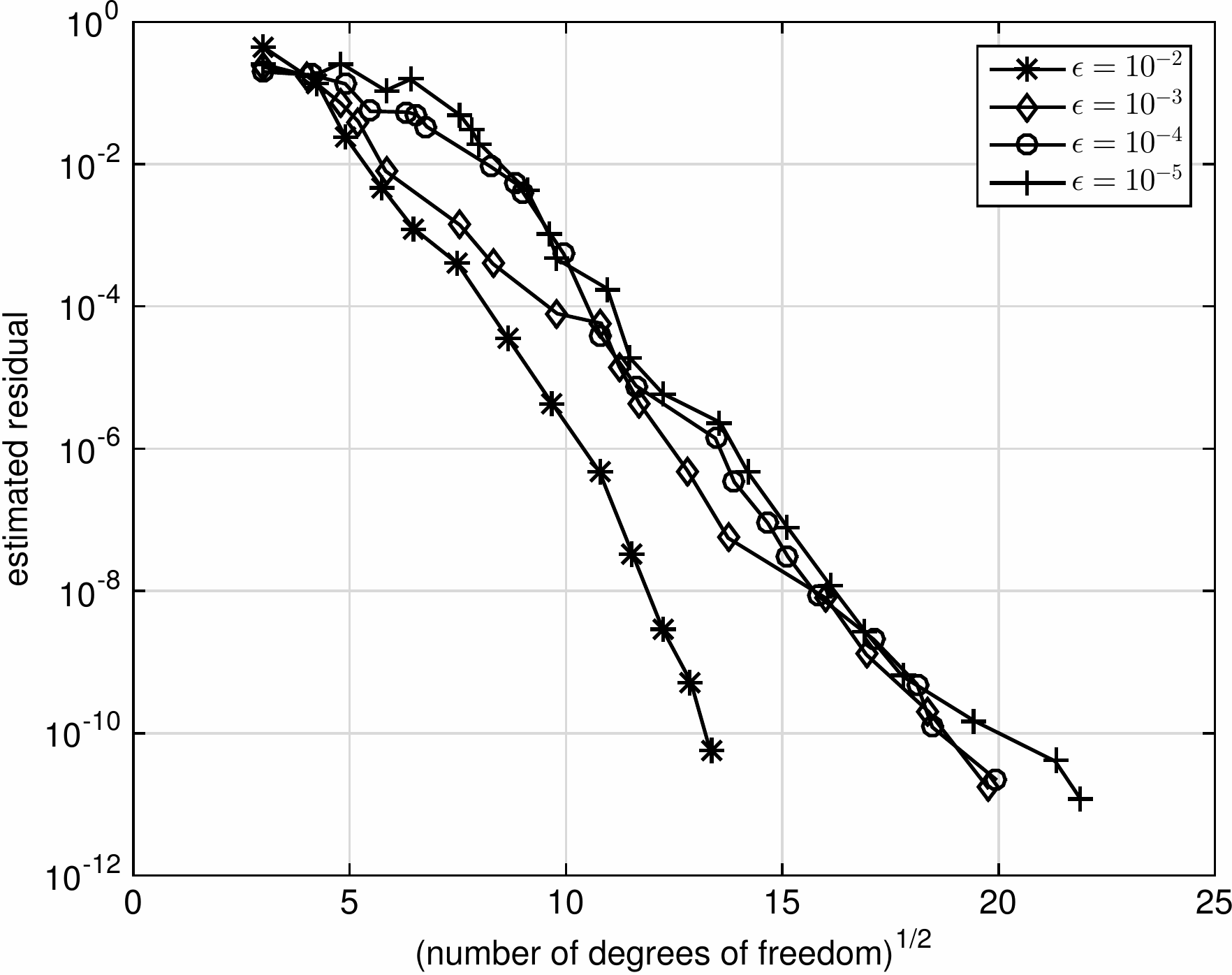}\hfill
	\includegraphics[width=0.45\linewidth,height=0.355\linewidth]{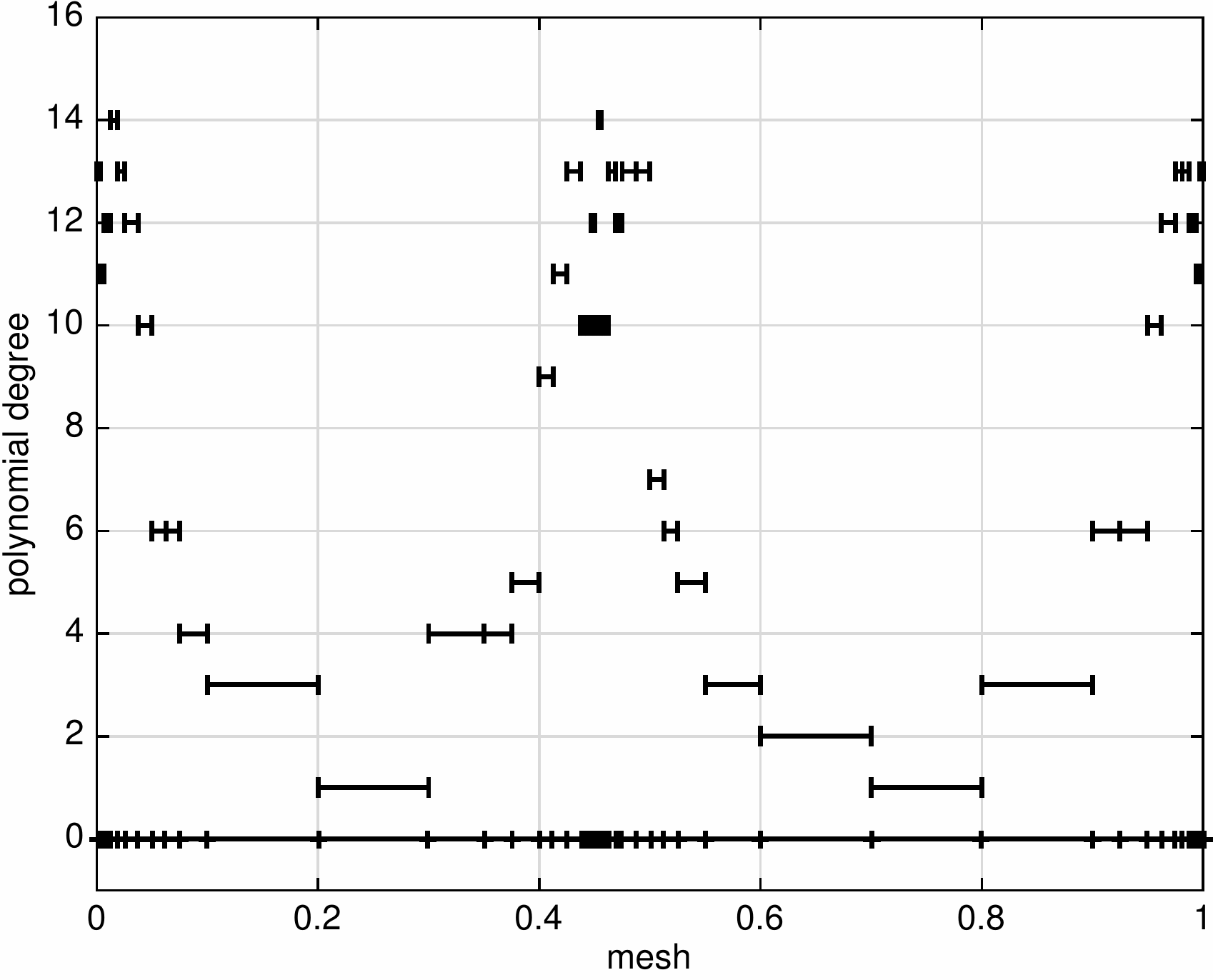}
\end{center}
\caption{Example~\ref{ex:slin1}: Estimated residuals for different choices of~$\e$ (left), and adaptively generated final $hp$-mesh for~$\e=10^{-5}$ (right).}
\label{fig:slin1_2}
\end{figure}


\begin{example}\label{ex:slin2}
Finally, we turn to the \emph{Fisher} equation,
\[
\begin{aligned}
-\e u'' =u-u^2 \ \text{on} \ (0,1),\qquad
u_{\e}(0)=\alpha,\quad \ u_{\e}(1)=\beta. 
\end{aligned}
\]
%
%
Solutions of this problem possess boundary layers, and, furthermore, for $ \alpha>-\nicefrac12 $ and  $\beta < 1 $, the solutions feature an increasing number of spikes (which are bounded between~0 and~1) as $\e \to 0$; see, e.g., \cite{Verhulst} for a more detailed discussion. The purpose of this example is to show that our fully adaptive Newton-Galerkin procedure is able to transport appropriate initial guesses sufficiently close along the corresponding trajectory of the Davydenko system~\eqref{eq:davy}, and, thereby, to obtain numerical solutions that retain the topological structure of the starting function. To proceed in this direction, we look at the two initial guesses displayed in Figures~\ref{fig:slin2_1}  and~\ref{fig:slin2_2} (left), as well as at the corresponding numerical solutions (right) resulting from Algorithm~\ref{al:full}. The initial mesh is based on 20 uniform elements in these experiments. We clearly see that the proposed method has generated two different solutions, which, essentially, show the same \emph{qualitative} properties as the respective initial guesses. We emphasize that, for~$\e\ll 1$, this will typically not work if a standard Newton approach is employed. Moreover, for the solution given in Figure~\ref{fig:slin2_2}, the $hp$-refined mesh is plotted in Figure~\ref{fig:slin2_3} (left), and shows that the boundary layers and interior spikes have been carefully resolved by the $hp$-adaptive scheme. In addition, from Figure~\ref{fig:slin2_3} (right), exponential convergence can be observed for both cases.\eremk

\begin{figure}
\begin{center}
	\includegraphics[width=0.45\linewidth]{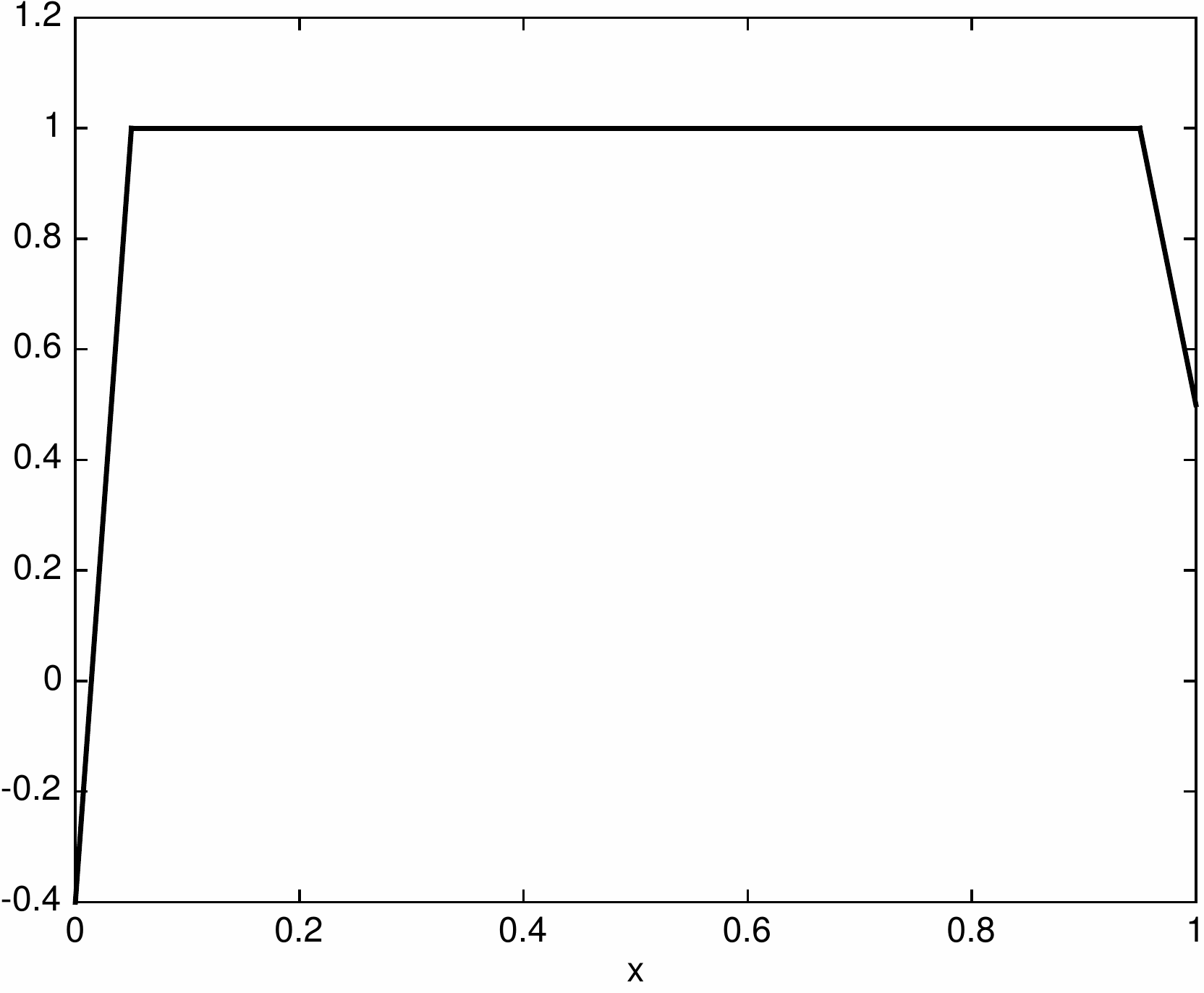}\hfill
	\includegraphics[width=0.45\linewidth]{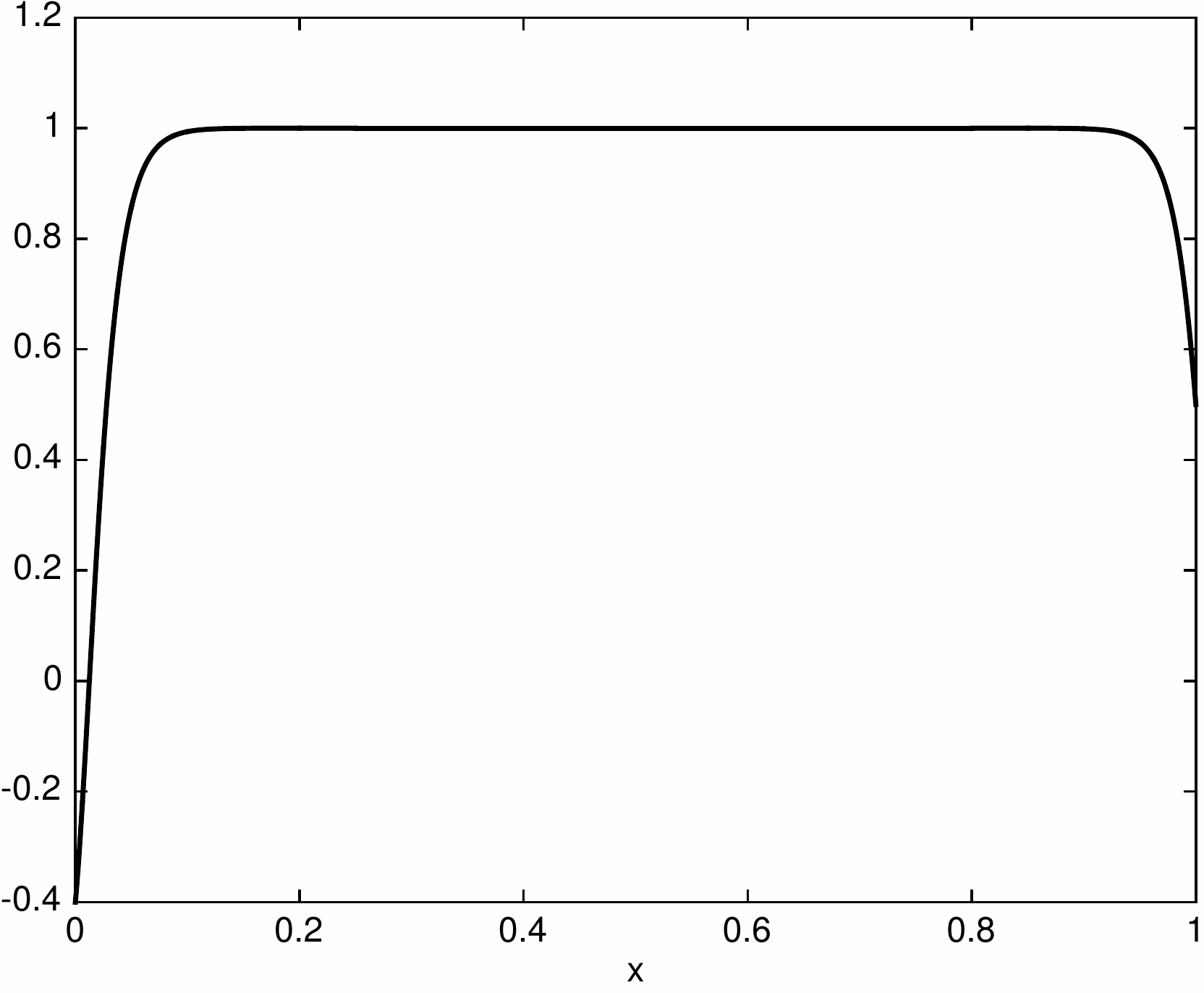}
\end{center}
\caption{Example~\ref{ex:slin2} for~$\e=0.00025$: Piecewise linear initial guess (left), and numerical solution (right).}
\label{fig:slin2_1}
\end{figure}

\begin{figure}
\begin{center}
	\includegraphics[width=0.45\linewidth]{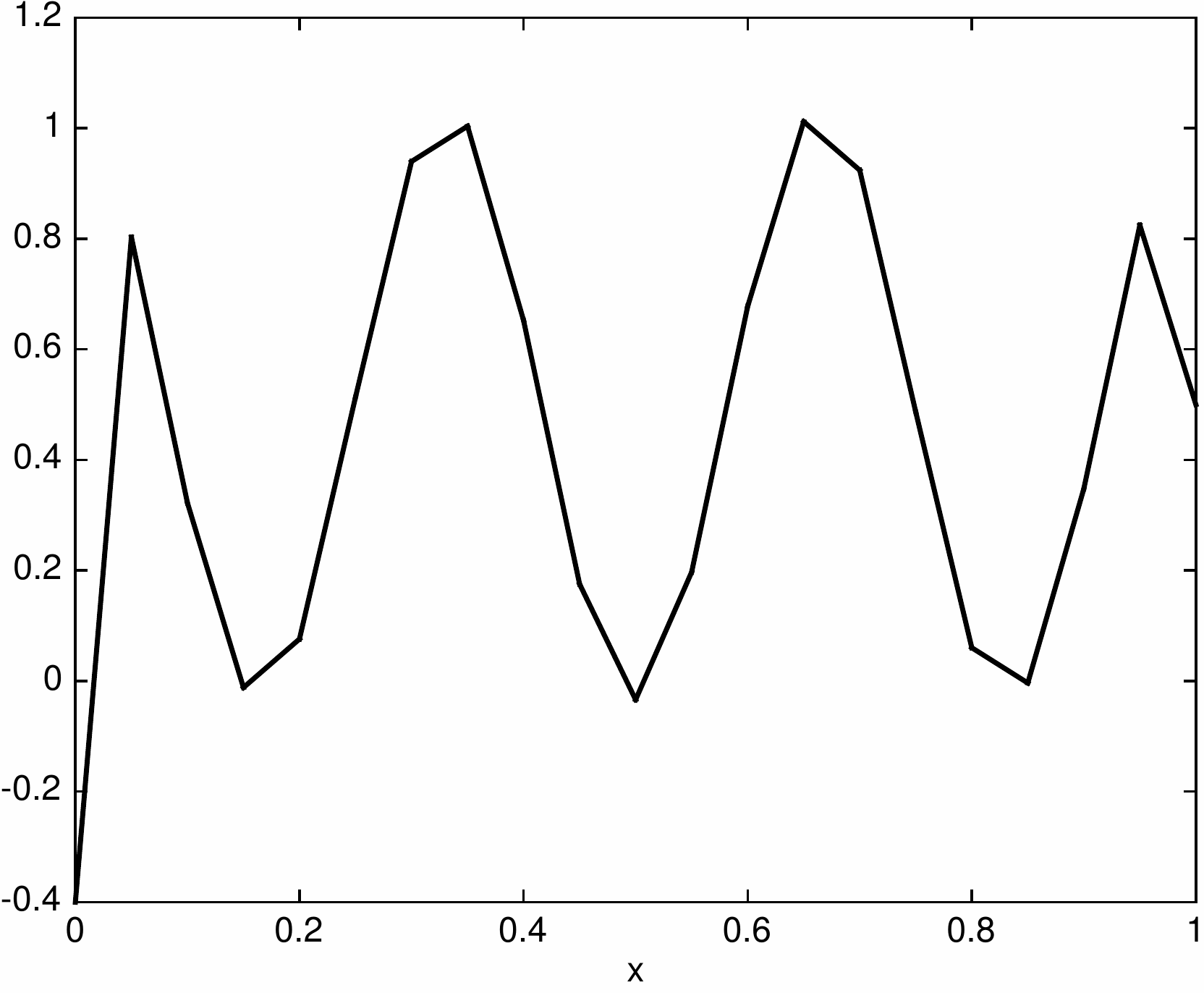}\hfill
	\includegraphics[width=0.45\linewidth]{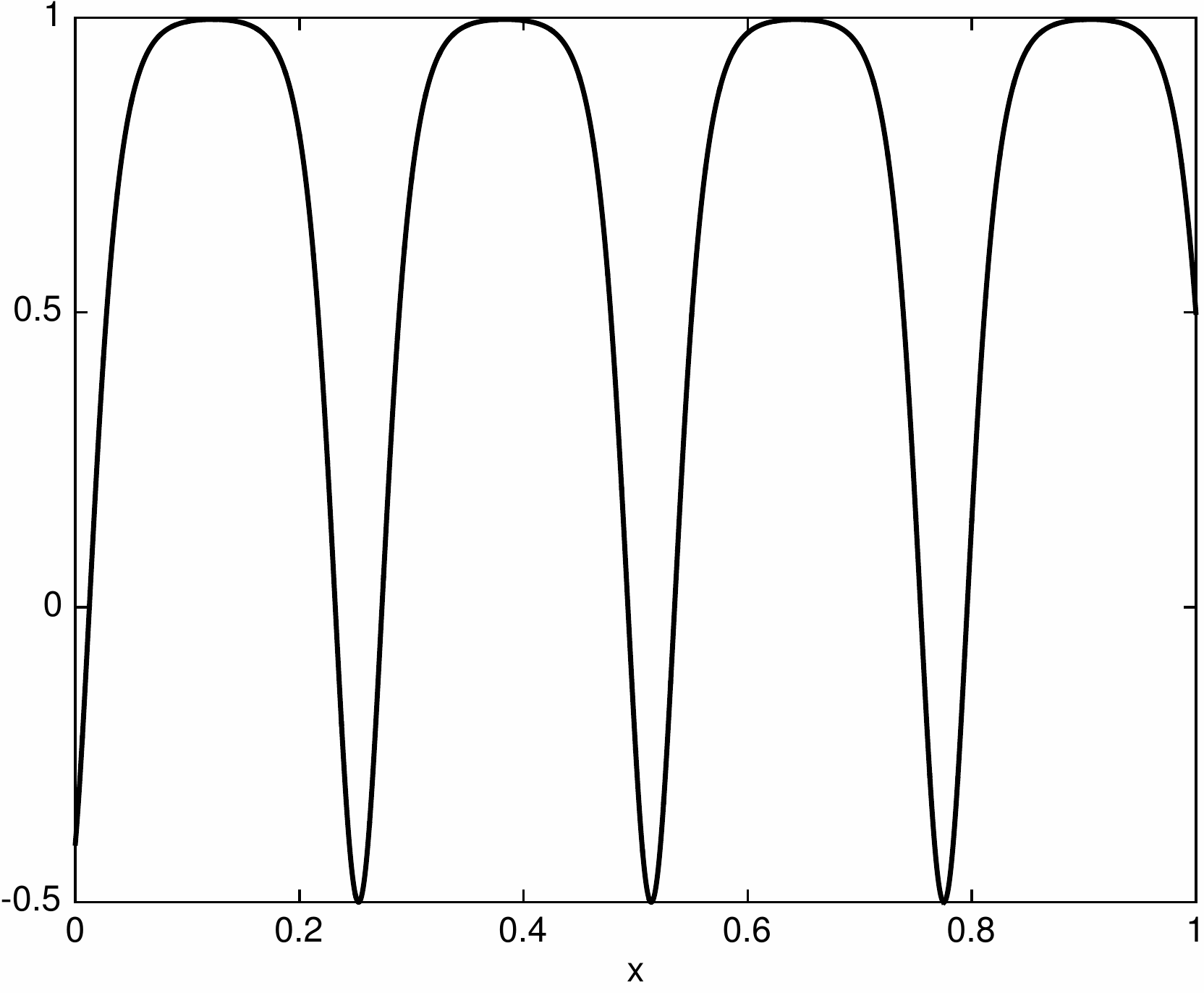}
\end{center}
\caption{Example~\ref{ex:slin2} for~$\e=0.00025$: Piecewise linear initial guess (left), and numerical solution (right).}
\label{fig:slin2_2}
\end{figure}

\begin{figure}
\begin{center}
	\includegraphics[width=0.45\linewidth]{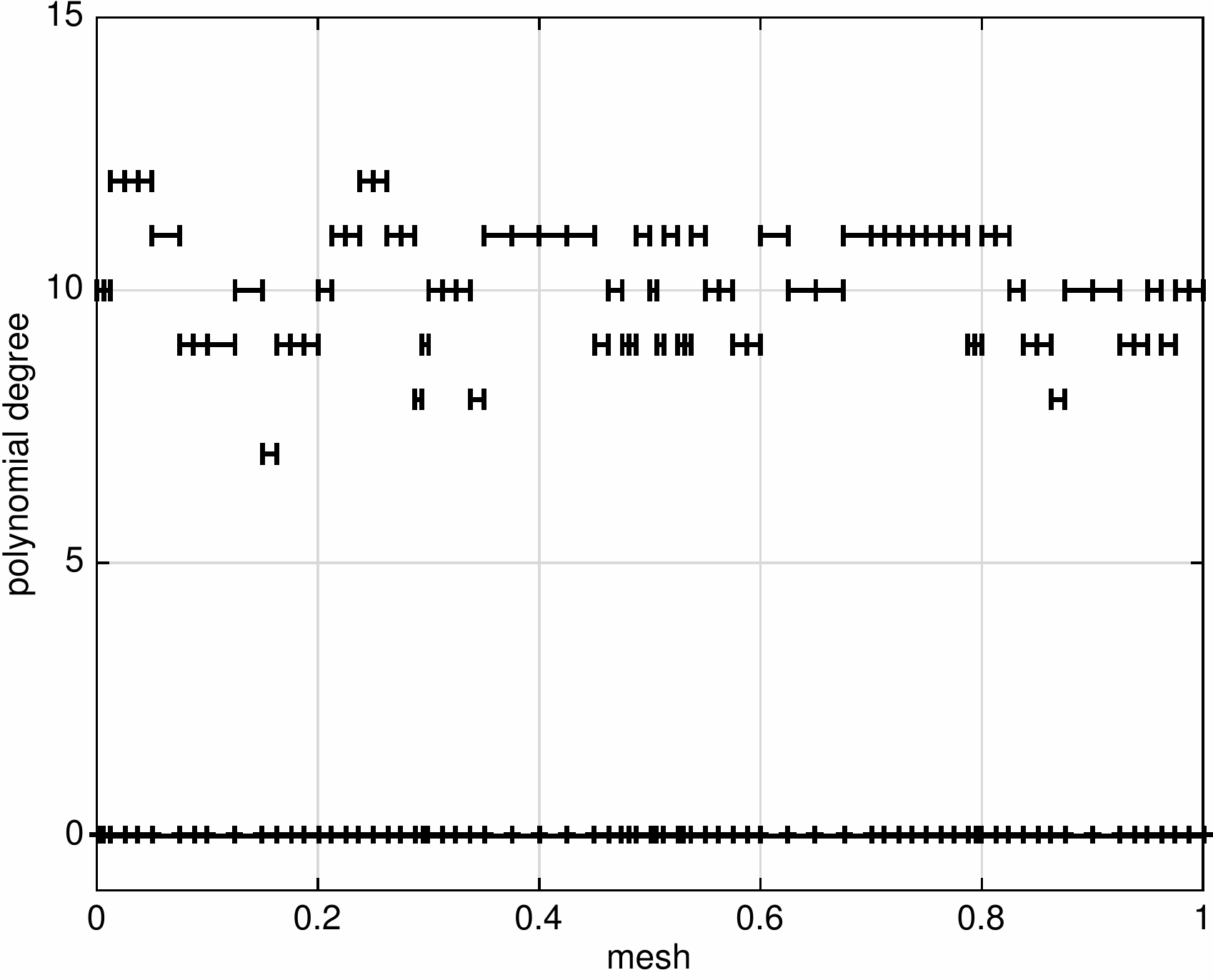}\hfill
	\includegraphics[width=0.45\linewidth,height=0.365\linewidth]{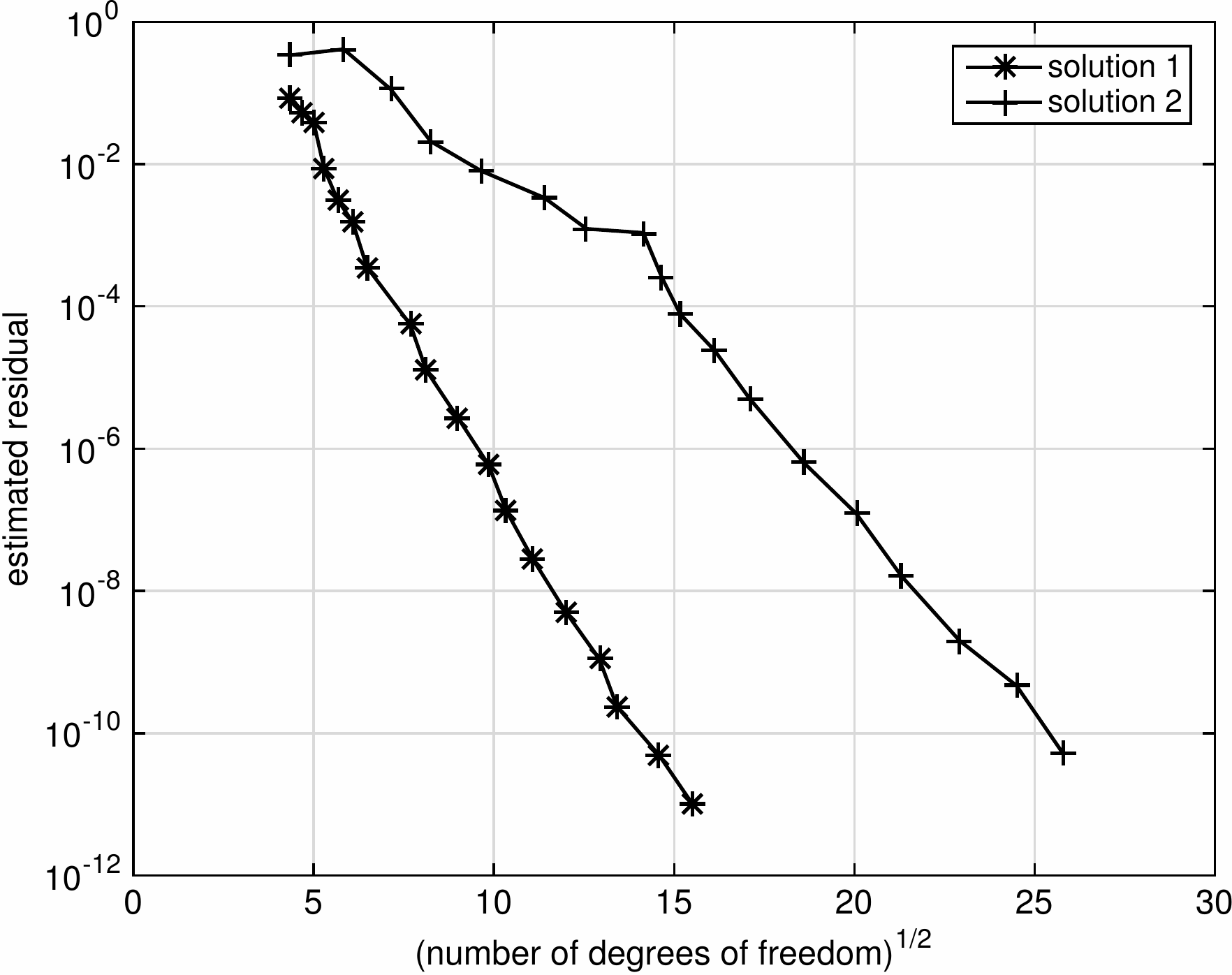}
\end{center}
\caption{Example~\ref{ex:slin2} for~$\e=0.00025$: Final $hp$-refined mesh (left), and exponential decay of residuals for the two solutions from Figure~\ref{fig:slin2_1} and~Figure~\ref{fig:slin2_2} (right).}
\label{fig:slin2_3}
\end{figure}
\end{example}

\section{Conclusions}\label{sc:concl}
The aim of this paper was to introduce a reliable and computationally feasible procedure for the numerical solution of general, semilinear elliptic boundary value problems with possible singular perturbations. The key idea is to combine an adaptive Newton method with an automatic mesh refinement finite element procedure. Here, the (local) Newton damping parameter is selected based on interpreting the scheme within the context of step size control for dynamical systems. Furthermore, the sequence of linear problems resulting from the Newton discretization is treated by means of a robust (with respect to the singular perturbations) $hp$-version {\em a posteriori} residual analysis, and by a corresponding adaptive $hp$-refinement scheme. The numerical experiments clearly illustrate the ability of our approach to reliably find solutions reasonably close to the initial guesses (in the sense that solutions with similar qualitative structure like the initial guesses can be found), and to robustly resolve the singular perturbations at an exponential rate.

\bibliographystyle{amsplain}
\bibliography{references}

\end{document}